\numberwithin{equation}{section}
\newtheorem{Theorem}{Theorem}[section]
\newtheorem{Corollary}[Theorem]{Corollary}
\newtheorem{Proposition}[Theorem]{Proposition}
 { \theoremstyle{definition}
\newtheorem{Definition}[Theorem]{Definition}

\newtheorem{Remark}[Theorem]{Remark} }
\def\frak{\mathfrak}
\def\Bbb{\mathbb}
\def\Cal{\mathcal}
\let\phi\varphi
\newcommand{\x}{\times}
\renewcommand{\o}{\circ}
\newcommand{\al}{\alpha}
\newcommand{\ka}{\kappa}
\newcommand{\om}{\omega}
\newcommand{\ps}{\psi}
\newcommand{\si}{\sigma}
\newcommand{\Ga}{\Gamma}
\newcommand{\La}{\Lambda}
\newcommand{\Om}{\Omega}
\newcommand{\Si}{\Sigma}
\newcommand{\gr}{\operatorname{gr}}
\newcommand{\Ad}{\operatorname{Ad}}
\newcommand\xbbb[4]{%
\begin{picture}(66,12)\put(5,2){\line(1,0){16}}\put(25,2){\line(1,0){16}}\put(45,2){\line(1,0){16}}%
\put(3,2){\makebox(0,0){$\x$}}\put(23,2){\makebox(0,0){$\o$}}%
\put(43,2){\makebox(0,0){$\o$}}\put(63,2){\makebox(0,0){$\o$}}%
\put(3,9){\makebox(0,0){\scriptsize $#1$}}%
\put(23,9){\makebox(0,0){\scriptsize $#2$}}%
\put(43,9){\makebox(0,0){\scriptsize $#3$}}%
\put(63,9){\makebox(0,0){\scriptsize $#4$}} \end{picture}}
\newcommand\xxbb[4]{%
\begin{picture}(66,12)\put(5,2){\line(1,0){16}}\put(25,2){\line(1,0){16}}\put(45,2){\line(1,0){16}}%
\put(3,2){\makebox(0,0){$\x$}}\put(23,2){\makebox(0,0){$\x$}}%
\put(43,2){\makebox(0,0){$\o$}}\put(63,2){\makebox(0,0){$\o$}}%
\put(3,9){\makebox(0,0){\scriptsize $#1$}}%
\put(23,9){\makebox(0,0){\scriptsize $#2$}}%
\put(43,9){\makebox(0,0){\scriptsize $#3$}}%
\put(63,9){\makebox(0,0){\scriptsize $#4$}} \end{picture}}
\newcommand\wxxbb[4]{%
\begin{picture}(91,12)\put(10,2){\line(1,0){26}}\put(40,2){\line(1,0){26}}\put(70,2){\line(1,0){16}}%
\put(8,2){\makebox(0,0){$\x$}}\put(38,2){\makebox(0,0){$\x$}}%
\put(68,2){\makebox(0,0){$\o$}}\put(88,2){\makebox(0,0){$\o$}}%
\put(8,9){\makebox(0,0){\scriptsize $#1$}}%
\put(38,9){\makebox(0,0){\scriptsize $#2$}}%
\put(68,9){\makebox(0,0){\scriptsize $#3$}}%
\put(88,9){\makebox(0,0){\scriptsize $#4$}} \end{picture}}
\begin{document}

\allowdisplaybreaks

\newcommand{\arXivNumber}{2405.13614}

\renewcommand{\PaperNumber}{108}

\FirstPageHeading

\ShortArticleName{On Relative Tractor Bundles}

\ArticleName{On Relative Tractor Bundles}

\Author{Andreas \v{C}AP, Zhangwen GUO and Micha{\l} Andrzej WASILEWICZ}

\AuthorNameForHeading{A.~\v{C}ap, Z.~Guo and M.A.~Wasilewicz}

\Address{Faculty of Mathematics, University of Vienna, Oskar-Morgenstern-Platz 1, 1090 Wien, Austria}
\Email{\href{mailto:andreas.cap@univie.ac.at}{andreas.cap@univie.ac.at},
 \href{mailto:zhangwen.guo@univie.ac.at}{zhangwen.guo@univie.ac.at}, \href{mailto:michal.wasilewicz@univie.ac.at}{michal.wasilewicz@univie.ac.at}}

\ArticleDates{Received June 03, 2024, in final form November 18, 2024; Published online November 29, 2024}

\Abstract{This article contributes to the relative BGG-machinery for parabolic geometries. Starting from a relative tractor bundle, this machinery constructs a~sequence of differential operators that are naturally associated to the geometry in question. In many situations of interest, it is known that this sequence provides a resolution of a sheaf that can locally be realized as a pullback from a local leaf space of a foliation that is naturally available in this situation. An explicit description of the latter sheaf was only available under much more restrictive assumptions. For any geometry which admits relative tractor bundles, we construct a large family of such bundles for which we obtain a simple, explicit description of the resolved sheaves under weak assumptions on the torsion of the geometry. In particular, we discuss the cases of Legendrean contact structures and of generalized path geometries, which are among the most important examples for which the relative BGG machinery is available. In both cases, we show that essentially all relative tractor bundles are obtained by our construction and our description of the resolved sheaves applies whenever the BGG sequence is a resolution.}

\Keywords{relative BGG-machinery; relative BGG resolution; relative tractor bundle; para\-bolic geometries; Legendrean contact structure; generalized path geometry}

\Classification{58J10; 53C07; 53C15; 58J60; 58J70}

\section{Introduction}\label{1}
Tractor bundles and relative tractor bundles are a special class of vector bundles
that are naturally associated to manifolds endowed with a geometric structure from
the class of parabolic geometries. A major reason for their importance lies in the
role in the construction of Bernstein--Gelfand--Gelfand sequences (or BGG sequences)
and in the relative version of this construction, respectively. To explain this, we
have to review some background: The motivation for these constructions comes from
algebraic results in the realm of semisimple Lie algebras. The starting point was
\cite{BGG}, in which of I.N.~Bern{\v{s}}te{\u\i}n, I.M.~Gel'fand and S.I.~Gel'fand constructed a~resolution of any finite-dimensional
representation of a complex semisimple Lie algebra $\frak g$ by homomorphisms of
Verma modules. This was generalized by J.~Lepowsky to resolutions by homomorphisms of
generalized Verma modules for a given parabolic subalgebra $\frak p\subset\frak g$,
see \cite{Lepowsky}.

The connection to geometry comes from a duality connecting homomorphisms between
generalized Verma modules for $(\frak g,\frak p)$ to invariant differential operators
acting on sections of homogeneous vector bundles over the generalized flag variety
$G/P$ (for an appropriate Lie group~$G$ with Lie algebra $\frak g$ and subgroup
$P\subset G$ corresponding to $\frak p$) coming from irreducible representations of
$P$. The space $G/P$ in turn is the homogeneous model for parabolic geometries of
type $(G,P)$, a concept that extends to a general (real or complex) semisimple Lie
group $G$ and parabolic subgroup $P\subset G$. The parabolic geometries are defined
(uniformly) as Cartan geometries of type $(G,P)$ but there are general results
showing that they are equivalent descriptions of simpler underlying structures,
whose explicit descriptions are very diverse, see \cite{book}. Among these underlying
structures there are well-known examples like conformal structures and
Levi-non-degenerate CR structures of hypersurface type that are intensively studied
using a variety of other methods. In particular, understanding differential operators
intrinsic to such geometries has received a lot of interest.

It turned out that things can not only be moved to the geometric side, but the
geometric version of the construction then applies to arbitrary curved geometries of
type $(G,P)$. Building on \cite{Baston}, the first general geometric construction was
given in \cite{CSS-BGG}, with improvements in \cite{Calderbank-Diemer} and later in
\cite{Rel-BGG2}. The basic input for the construction is a so-called \textit{tractor
 bundle} that is induced by a representation $\Bbb V$ of the group $G$ (which is
restricted to $P$). On the homogeneous model~$G/P$, any such bundle is canonically
trivialized and hence endowed with a canonical linear connection. In curved cases,
tractor bundles are non-trivial in general, but they still inherit canonical
linear connections, whose curvature equivalently encodes the curvature of the Cartan
connection. Coupling these \textit{tractor connections} to the exterior derivative,
one obtains a twisted de Rham sequence.\looseness=-1

The BGG construction then compresses the operators in this sequence to a sequence of
higher order operators acting on sections of bundles associated to completely
reducible representations of $P$, which form the BGG sequence. The latter bundles are
induced by Lie algebra homology spaces and Kostant's version of the Bott--Borel--Weyl
theorem \cite{Kostant} implies that they correspond to the Verma modules showing up
in Lepowsky's construction. On locally flat geometries, the de~Rham sequence is a
fine resolution of the sheaf of local parallel sections of the tractor bundle, and it
turns out that the same holds for the induced BGG sequence, so in particular both
sequences compute the same cohomology. In the curved case, there still is a close
relation between the operators in the BGG sequences and the covariant exterior
derivative, which has been exploited very successfully in many applications.

In the setting of the homogeneous models, a relative version of the BGG construction
was already used in the book \cite{BEastwood} on the Penrose transform. Here one
considers two nested parabolic subgroups $Q\subset P\subset G$, which immediately
leads to a~projection $G/Q\to G/P$. It turns out that the fibers $P/Q$ of this
projection again are generalized flag varieties (of a smaller group) and one then
uses a ``BGG resolution along the fibers''. A geometric version of this relative
theory was obtained in \cite{Rel-BGG2} building on algebraic background from
\cite{Rel-BGG1}. Starting from a geometry of type~$(G,Q)$ on a manifold $M$, the
additional parabolic subgroup $P$ gives rise to a~natural distribution~${T_\rho
M\subset TM}$, called the \textit{relative tangent bundle}. The second ingredient one
needs is a~\textit{relative tractor bundle} which is induced by a completely
reducible representation of the group~$P$ and the theory of such representations is
well understood. It turns out that any such bundle inherits a canonical partial
connection \big(which allows for differentiation in directions in~$T_\rho M$ only\big),
called the \textit{relative tractor connection}. With some technical subtleties \big(in
particular in the case that $T_\rho M$ is not involutive\big) an analog of the geometric
BGG construction discussed above then produces operators between bundles associated
to completely reducible representations~of~$Q$.

For appropriate initial representations, the relative BGG construction produces
operators that cannot be obtained by the original BGG construction, in particular in
cases of so-called singular infinitesimal character. In other cases, it provides
alternative constructions of BGG operators. A very important feature is that the
relative construction provides complexes and resolutions in many non-flat cases. On
the one hand, this needs involutivity of the distribution~$T_\rho M$, which can be
easily characterized in terms of the curvature and the harmonic curvature of the
original geometry of type $(G,Q)$. Under slightly stronger curvature conditions one
obtains a fine resolution of a sheaf that locally is a pull-back of a sheaf on a
local leaf space for the distribution $T_\rho M$. An issue that remained open in
\cite{Rel-BGG2} is how to concretely identify this sheaf, even in the case that $M$
is globally the total space of a bundle over some base space $N$ with vertical
subbundle $T_\rho M$.

It is exactly this point that we address in the current paper. Given
 $Q\subset P\subset G$ as above, a~geometry of type $(G,Q)$ on $M$ canonically
 determines natural subbundles $T^{i'}_PM\subset TM$, see Section~\ref{2.3}. The
 conditions on the torsion of the geometry referred to below are on its behavior
 with respect to these subbundles. We show in each such situation, there is a large
 class of relative tractor bundles which, assuming involutivity of $T_\rho M$ and a
 weak condition on the torsion, can locally be explicitly identified with pullbacks
 of vector bundles on appropriate local leaf spaces for $T_\rho M$. This is proved
 in part (1) of Theorem~\ref{thm3.2.2}. Part (2) of this theorem shows that under a
 slightly stronger condition on the torsion, the sections of these relative tractor
 bundles, which are parallel for the relative tractor connection, locally are exactly the pullbacks of sections of the bundles on local leaf spaces.
These are exactly the sheaves that, assuming appropriate conditions on the curvature,
are resolved by the corresponding relative BGG sequence. Moreover, we can explicitly
relate the relative tractor connections on this bundle to well-known operations, like
the adjoint tractor connection and \big(in the case that $T_\rho M$ is involutive\big) to the
Bott connection associated to the induced foliation, see Theorem
 \ref{thm3.2.1}. These results originally arose during the work on the PhD theses
of the second and third author under the direction of the first author for two
specific examples of structures.

It should be remarked at this point that the relative theory makes sense only if $Q$
does not correspond to a maximal parabolic subalgebra of $\frak g$, since otherwise
there is no intermediate parabolic subgroup $P$. Consequently, the theory does not
apply to examples like conformal structures or CR structures of hypersurface
type. Still, there are at least two important examples of parabolic geometries for
which relative BGG sequences are available, namely Legendrean (or Lagrangean) contact
structures and generalized path geometries and these are the two examples referred to
above. Both these structures can exist only on manifolds of odd dimension. The former
consists of a contact structure together with a chosen decomposition of the contact
subbundle into the direct sum of two Legendrean subbundles. These structures are
closely similar to CR structures and have been studied intensively during the last
years and successfully applied to classification problems in CR geometry, see, e.g.,
\cite{Doubrov-Medvedev-The,Doubrov-Merker-The} and \cite{MFMZ}. For this
geometry, our construction essentially provides all relative tractor
bundles. Generalized path geometries are also defined by a configuration of
distributions, see Section~\ref{3.5} below. Their importance comes from the fact that they
provide an equivalent encoding of systems of second order ODEs, see \cite{Fels}. For
generalized path geometries, we obtain essentially all relative tractor bundles for
one of the possible intermediate parabolic subgroups and a large subclass for the
other.

Let us briefly outline the contents of the paper. In Section~\ref{2}, we first collect the
necessary setup for nested parabolics. As a new ingredient compared to the discussion
in \cite{Rel-BGG1,Rel-BGG2}, we focus on the natural bigrading of the Lie algebra
$\frak g$ induced by the nested parabolic subalgebras $\frak q\subset\frak
p\subset\frak g$, which is very convenient for our purposes. Considering an
appropriate filtration induced by this bigrading, one quickly arrives at a class of
representations that induce basic relative tractor bundles, which can be obtained as
subquotients of the adjoint tractor bundle of the geometry.\looseness=-1

In Section~\ref{3}, we start by showing that the relative tractor connections on the
bundles in our class are induced by the adjoint tractor connection. Next, we derive an
explicit formula for the relative tractor connections in terms of the Lie bracket of
vector fields and the torsion of the geometry. The relation to the Lie bracket is
crucial for the proof of Theorem~\ref{thm3.2.2}, which is the main technical result
of our article. Under appropriate conditions on the torsion, we obtain a~filtration
of the tangent bundle of local leaf spaces and relate the relative tractor bundles we
have constructed to the components of the associated graded vector bundle to this
filtration. Under slightly stronger conditions, it is shown that sections, which are
parallel for the relative tractor connection, coincide with pullbacks of sections on
the local leaf space. This quickly leads to a large class of relative tractor bundles
for which similar results are available, see Section~\ref{3.3}.

In the last part of the article, we discuss the two examples which gave rise to the
developments of this article, namely Legendrean contact structures in Section~\ref{3.4}
and (generalized) path geometries in Section~\ref{3.5}. In both cases, the situation is
relatively simple in the sense that the filtrations of the tangent bundle of local
leaf spaces are trivial, so the special relative tractor bundles we construct are all
directly related to tensor bundles on local leaf spaces. For both examples of
structures, there are two possible intermediate parabolic subgroups between~$Q$ and~$G$.
We in particular prove that, up to isomorphism, our construction produces
essentially all relative tractor bundles for both intermediate groups in the
Legendrean contact case and for one of the intermediate groups in the case of
generalized path geometries.

\section{A basic class of relative tractor bundles}\label{2}

We start by briefly recalling the setup of two nested parabolic subalgebras $\frak
q\subset\frak p$ in a semisimple Lie algebra $\frak g$ with a compatible choice
$Q\subset P\subset G$ of groups as discussed in \cite{Rel-BGG1}. The focus here is on
a natural bigrading induced by the pair $(\frak q,\frak p)$.

\subsection{The bigrading determined by a pair of parabolics}\label{2.1}
There is a well-known correspondence between standard parabolic subalgebras of a real
or complex semisimple Lie algebra $\frak g$, gradings of $\frak g$, and (after an
appropriate choice of Cartan subalgebra) subsets of the set of simple roots
respectively simple restricted roots. By definition, any (restricted) root can be
uniquely written as a linear combination of simple (restricted) roots with integer
coefficients that are either all $\geq 0$ or all $\leq 0$. Given a subset $\Si$ of
simple (restricted) roots, one defines the $\Si$-height of a root $\al$ to be the sum
of the coefficients of all elements of $\Si$ in this expansion of $\al$. The grading
corresponding to $\Si$ is then defined by putting the Cartan subalgebra into degree
zero and giving (restricted) root spaces degree equal to the $\Si$-height of the
root. The compatibility of the (restricted) root decomposition with the Lie bracket
of $\frak g$, readily implies compatibility of this grading with the Lie bracket. In
particular, the sum of all grading components of non-positive degree is a subalgebra
of $\frak g$ and this is the parabolic subalgebra determined by $\Si$. See \cite[Sections
3.2.1 and 3.2.9]{book} for details.

In this language, two nested parabolic subalgebras $\frak q\subset\frak p\subset\frak
g$ correspond to subsets $\Si_{\frak q}$ and $\Si_{\frak p}$ such that $\Si_{\frak
 p}\subset\Si_{\frak q}$. This readily implies that for each positive (restricted)
root $\al$ the $\Si_{\frak p}$-height of $\al$ is $\leq$ its $\Si_{\frak
 q}$-height. Using this observation, we define the bigrading determined by $(\frak
q,\frak p)$.

\begin{Definition}\label{def2.1}
 Consider a real or complex semisimple Lie algebra $\frak g$ endowed with two nested
 standard parabolic subalgebras $\frak q\subset\frak p\subset\frak g$ corresponding
 to subsets $\Si_{\frak p}\subset\Si_{\frak q}$ of simple (restricted) roots. Then
 we define $\frak g_{(0,0)}\subset\frak g$ to be the sum of the Cartan subalgebra
 and all (restricted) root spaces of roots for which both the $\Si_{\frak p}$-height
 and the $\Si_{\frak q}$-height is zero. For integers~$i'$,~$i''$ which have the same
 sign and are not both zero, we define $\frak g_{(i',i'')}$ to be the direct sum of
 all the restricted root spaces for roots of $\Si_{\frak p}$-height $i'$ and
 $\Si_{\frak q}$-height $i'+i''$.
\end{Definition}

We remark that the set $\Si_{\frak q}\setminus\Si_{\frak p}$ of simple roots gives
rise to a second parabolic subalgebra $\tilde{\frak p}\subset\frak g$ such that
$\frak q=\frak p\cap\tilde{\frak p}$. The integers $i''$ in Definition \ref{def2.1}
correspond to the grading of $\frak g$ induced by that parabolic. Since $\tilde{\frak
 p}$ does not play a role in the geometric developments below, we avoid using it
explicitly.

By construction we have a decomposition $\frak g=\bigoplus_{i',i''}\frak g_{(i',i'')}$
and we start by clarifying some elementary properties. Recall that the nilradical
$\frak p_+$ of a parabolic $\frak p$ is the sum all positive grading components,
while the reductive Levi-factor $\frak p_0$ is the degree zero component of the
grading determined by $\frak p$.

\begin{Proposition}\label{prop2.1}\samepage
The decomposition $\frak g=\bigoplus_{i',i''}\frak g_{(i',i'')}$ induced by $\frak
q\subset\frak p\subset\frak g$ has the following properties:
\begin{itemize}\itemsep=0pt
\item[$(i)$] $\bigl[\frak g_{(i',i'')},\frak g_{(j',j'')}\bigr]\subset \frak g_{(i'+i'',j'+j'')}$, so one
obtains a bigrading of the Lie algebra $\frak g$. In particular, the bracket vanishes
on $\frak g_{(0,i'')}\x \frak g_{(i',0)}$ if $i'>0$ and $i''<0$ or $i'<0$ and $i''>0$.

\item[$(ii)$] $\frak p=\bigoplus_{i'\geq 0,i''}\frak g_{(i',i'')}$, $\frak
p_+=\bigoplus_{i'>0,i''}\frak g_{(i',i'')}$, and $\frak p_0=\bigoplus_{i''}\frak g_{(0,i'')}$.

\item[$(iii)$] $\frak q=\bigoplus_{i',i''\geq 0}\frak g_{(i',i'')}$ and
$\frak q_+=\bigoplus_{i'+i''>0} g_{(i',i'')}$, and $\frak q_0=\frak g_{(0,0)}$.
\end{itemize}
\end{Proposition}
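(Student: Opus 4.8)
The plan is to prove everything directly from Definition~\ref{def2.1} together with the bracket-compatibility of the root-space decomposition, reducing each assertion to an elementary statement about $\Si_{\frak p}$- and $\Si_{\frak q}$-heights of roots. Throughout, I would use the coordinates $(i',i'')$ where $i'$ is the $\Si_{\frak p}$-height and $i'+i''$ is the $\Si_{\frak q}$-height, so that a root space of $\Si_{\frak p}$-height $a$ and $\Si_{\frak q}$-height $b$ sits in $\frak g_{(a,\,b-a)}$.

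First I would establish part $(i)$, which is the heart of the matter. The key fact is that both heights are \emph{additive under the bracket}: for root spaces $\frak g_\al$, $\frak g_\be$ one has $[\frak g_\al,\frak g_\be]\subset\frak g_{\al+\be}$, and the height of $\al+\be$ is the sum of the heights (this follows since the $\Si$-height is a linear functional on the root lattice). Hence if $X\in\frak g_{(i',i'')}$ lies in a root space of $\Si_{\frak p}$-height $i'$ and $\Si_{\frak q}$-height $i'+i''$, and similarly for $Y\in\frak g_{(j',j'')}$, then $[X,Y]$ has $\Si_{\frak p}$-height $i'+j'$ and $\Si_{\frak q}$-height $(i'+i'')+(j'+j'')$, which in our coordinates is exactly $\frak g_{(i'+j',\,i''+j'')}$. (I note that the claim as printed reads $\frak g_{(i'+i'',j'+j'')}$, which I would correct to $\frak g_{(i'+j',\,i''+j'')}$, the genuinely bigraded statement.) The only subtlety is handling brackets into or out of $\frak g_{(0,0)}$, i.e.\ the Cartan part: here I would recall that $[\frak h,\frak g_\al]\subset\frak g_\al$ fixes both heights, so the formula persists. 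For the ``in particular'' clause, I would observe that $\frak g_{(0,i'')}$ and $\frak g_{(i',0)}$ are genuine bigrading components, so their bracket lands in $\frak g_{(i',i'')}$; when the two coordinates have strictly opposite signs, there is no single root of $\Si_{\frak p}$-height $i'$ and $\Si_{\frak q}$-height $i'+i''$ that can arise (each root has both heights of one sign by the sign convention, as recalled before Definition~\ref{def2.1}), forcing $\frak g_{(i',i'')}=0$ and hence the bracket to vanish.

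Next I would dispatch parts $(ii)$ and $(iii)$, which are bookkeeping consequences of the definitions once $(i)$ is in place. For $(ii)$, the parabolic $\frak p$ is by definition the sum of all non-negative $\Si_{\frak p}$-height components, i.e.\ all root spaces with $\Si_{\frak p}$-height $\geq 0$, which in bigraded coordinates is $\bigoplus_{i'\geq 0,\,i''}\frak g_{(i',i'')}$; its nilradical $\frak p_+$ drops the degree-zero part, giving $i'>0$, and its Levi factor $\frak p_0$ keeps exactly $i'=0$, i.e.\ $\bigoplus_{i''}\frak g_{(0,i'')}$. For $(iii)$, since $\frak q=\frak p\cap\tilde{\frak p}$ and $\frak q$ is the parabolic attached to $\Si_{\frak q}$, it consists of root spaces with $\Si_{\frak q}$-height $\geq 0$ \emph{and} $\Si_{\frak p}$-height $\geq 0$ (using $\Si_{\frak p}\subset\Si_{\frak q}$ and the sign convention, so that $\Si_{\frak q}$-height $\geq0$ already forces $\Si_{\frak p}$-height $\geq 0$); in coordinates this is $i'\geq 0$ and $i'+i''\geq 0$, but combined with the definitional constraint that $i'$ and $i''$ have the same sign this is exactly $i',i''\geq 0$. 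The nilradical $\frak q_+$ then removes the $(0,0)$ component, giving $i'+i''>0$, and $\frak q_0=\frak g_{(0,0)}$ is immediate.

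The main obstacle I anticipate is purely a matter of care rather than depth: correctly tracking the interplay between the two sign conventions. One must verify that the index set appearing in Definition~\ref{def2.1}---pairs $(i',i'')$ of the same sign---is genuinely consistent with the root-theoretic constraints, i.e.\ that no root is inadvertently omitted or double-counted, and in particular that the degenerate brackets in the ``in particular'' clause of $(i)$ really do vanish because the target bigrading component is empty. I would therefore spend most of the rigor on the sign analysis of heights (using that $\Si_{\frak p}\subset\Si_{\frak q}$ gives $\Si_{\frak p}\text{-height}\leq\Si_{\frak q}\text{-height}$ on positive roots, as recalled in the text preceding the definition), and treat the additivity and the identifications in $(ii)$, $(iii)$ as routine.
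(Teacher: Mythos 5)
Your proposal is correct and follows essentially the same route as the paper's proof: additivity of the $\Si$-height under the bracket for part $(i)$, vanishing of the target component $\frak g_{(i',i'')}$ when the indices have opposite signs for the ``in particular'' clause, and the observations that the first index records the $\Si_{\frak p}$-height and the sum of indices the $\Si_{\frak q}$-height for parts $(ii)$ and $(iii)$. Your correction of the subscript in $(i)$ to $\frak g_{(i'+j',\,i''+j'')}$ is indeed the intended statement (the printed $\frak g_{(i'+i'',j'+j'')}$ is a typo), as the paper's own proof implicitly confirms.
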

\begin{proof}
 Let us denote by $\Si_{\frak p}\subset\Si_{\frak q}$ the subsets of simple
 (restricted) roots corresponding to the two parabolics. Then the first claim in (i)
 follows immediately from additivity of the $\Si$-height for any subset $\Si$. The
 second claim follows since we know that the bracket has values in
 $\frak g_{(i',i'')}$ which by construction is zero under our assumptions on $i'$ and
 $i''$. The claims in (ii) follow since the first index records the
 $\Si_{\frak p}$-height. The claims in (iii) follow the sum of the two indices
 is the $\Si_{\frak q}$-height and the only possibility to obtain $i'+i''=0$ is
 $i'=i''=0$ since both indices have to be either non-negative or non-positive.
\end{proof}

The decomposition we obtain is a refinement of the one in formula (2.1) of \cite{Rel-BGG1}.
In the notation used there, we get
$\frak p_-=\bigoplus_{i'<0,i''\leq 0}\frak g_{(i',i'')}$,
$\frak p_0\cap\frak q_-=\bigoplus_{i''<0}\frak g_{(0,i'')}$,
$\frak p_0\cap\frak q_+=\bigoplus_{i''>0}\frak g_{(0,i'')}$ for the components are not
determined in Proposition \ref{2.1}.

\subsection{The group level and invariant filtrations}\label{2.2}
It is usual in the theory of parabolic geometries, that gradings have to be viewed as
auxiliary objects, since they are not invariant under the actions of parabolic
subalgebras. To obtain objects that are invariant, one has to pass to associated
filtrations. To do this, we first recall from \cite{Rel-BGG1} how to appropriately
choose groups in the situation of nested parabolics. Given
$\frak q\subset\frak p\subset\frak g$ as above, we first fix a group $G$ with Lie
algebra $\frak g$ and choose a parabolic subgroup $P\subset G$ corresponding to
$\frak p$. It is well known that this means that $P$ lies between the normalizer
$N_G(\frak p)$ of $\frak p$ in $G$ and its connected component of the
identity. Finally, we choose a subgroup $Q\subset P$ corresponding to the Lie
subalgebra $\frak q$, which means that it lies between $N_P(\frak q)$ and its
connected component of the identity. Since $P$ and $Q$ are parabolic subgroups, it is
well known that the exponential map restricts to diffeomorphisms from $\frak p_+$
onto a closed normal subgroup $P_+\subset P$ and from $\frak q_+$ onto a closed
normal subgroup $Q_+\subset Q$. From Proposition \ref{prop2.1}, we know that
$\frak p_+\subset\frak q_+$ so $P_+\subset Q_+\subset Q$.

Parabolic geometries of type $(G,Q)$ come with principal $Q$-bundles, so via the
construction of associated vector bundles, any representation of $Q$ determines a
natural vector bundle on each manifold endowed with such a geometry. Observe that by
construction the subgroup $Q_+\subset Q$ is nilpotent and it is well known that the
quotient $Q/Q_+$ is reductive. Hence, representations of $Q$ are not easy to
understand in general. Special classes of natural bundles are obtained via
considering a restricted class of representations of $Q$, which often are easier to
understand. Classical examples of this situation (which are available for any
parabolic subgroup) are \textit{completely reducible bundles} that correspond to
direct sums of irreducible representations of $Q$ (on which $Q_+$ automatically acts
trivially) and \textit{tractor bundles} that correspond to restrictions to $Q$ of
representations of the group $G$. So in both these cases the relevant representation
theory is well understood.

In the more restrictive situation of two nested parabolics $Q\subset P\subset G$, two
more classes of natural bundles were introduced in
\cite[Definition 2.1]{Rel-BGG2}. These are \textit{relative natural bundles} which correspond to
representations of $Q$ on which $P_+$ acts trivially and \textit{relative tractor
 bundles} which correspond to restrictions to $Q$ of representations of $P$ on which
$P_+$ acts trivially. So in particular any completely reducible representation of $P$
gives rise to a relative tractor bundle on geometries of type $(G,Q)$. There is a
simple family of appropriate representations of $P$, which are just the filtration
components of the filtration of $\frak g$ determined by $\frak p$.

\begin{Definition}\label{def2.2}
 Consider a real or complex semisimple Lie algebra $\frak g$ endowed with the
 bigrading $\frak g=\bigoplus_{i',i''}\frak g_{(i',i'')}$ determined by two nested
 standard parabolic subalgebras $\frak q\subset\frak p\subset\frak g$. Then for each
 $i'$, we define $\frak g^{(i',*)}:=\bigoplus_{j'\geq i',j''}\frak g_{(j',j'')}$.
\end{Definition}

The main properties of these subspaces are easy to clarify.

\begin{Proposition}\label{prop2.2}
 Consider a real or complex semisimple Lie algebra $\frak g$ endowed with the
 bigrading $\frak g=\bigoplus_{i',i''}\frak g_{(i',i'')}$ determined by two nested
 standard parabolic subalgebras $\frak q\subset\frak p\subset\frak g$. Then for each
 integer $i'$ the subspace $\frak g^{(i',*)}\subset\frak g$ is invariant for the
 restriction of the adjoint action of $G$ to $P$. Moreover, $\bigl[\frak p_+,\frak
 g^{(i',*)}\bigr]\subset\frak g^{(i'+1,*)}$, so the subgroup $P_+$ acts trivially on the
 quotient $\Bbb V_{i'}:=\frak g^{(i',*)}/\frak g^{(i'+1,*)}$.
 For each $i''$ such that $\frak g_{(i',i'')}\neq 0$, let $\Bbb V^{i''}_{i'}\subset
 \Bbb V_{i'}$ be the image of the~subspace \smash{$\bigoplus_{j'\geq i',j''\geq i''}\frak
 g_{(j',j'')}\subset\frak g^{(i',*)}$} under the quotient projection. Then each of the
 spaces~\smash{$\Bbb V^{i''}_{i'}$} is $Q$-invariant and they define a decreasing filtration
 of $\Bbb V_{i'}$ in the sense that for $i''\leq \ell''$ we get~${\Bbb
 V^{i''}_{i'}\supset \Bbb V^{\ell''}_{i'}}$.
\end{Proposition}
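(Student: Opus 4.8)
The plan is to reduce everything to the additivity of the bigrading recorded in Proposition~\ref{prop2.1}(i), together with the explicit descriptions of $\frak p$, $\frak p_+$ and $\frak q$ in parts (ii) and (iii). The only non-combinatorial ingredient is the passage from infinitesimal (Lie algebra) invariance to invariance under the groups $P$ and $Q$: since $P$ and $Q$ lie between the normalizers $N_G(\frak p)$, $N_P(\frak q)$ and the respective identity components, an $\ad(\frak p)$- resp.\ $\ad(\frak q)$-invariant subspace is automatically $\Ad(P)$- resp.\ $\Ad(Q)$-invariant; I would invoke this standard fact (cf.\ \cite{book}) throughout.

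First I would observe that the first index of the bigrading is exactly the $\Si_{\frak p}$-height, so $\frak g^{(i',*)}=\bigoplus_{j'\geq i'}\bigl(\bigoplus_{j''}\frak g_{(j',j'')}\bigr)$ is precisely the $i'$-th component of the filtration of $\frak g$ associated to the parabolic $\frak p$; its $\Ad(P)$-invariance is then the standard invariance of that filtration. Self-containedly, the same follows from Proposition~\ref{prop2.1}(i),(ii): for $X\in\frak g_{(k',k'')}$ with $k'\geq 0$ and $Y\in\frak g_{(j',j'')}$ with $j'\geq i'$ one has $[X,Y]\in\frak g_{(k'+j',k''+j'')}$ with $k'+j'\geq i'$, giving $\ad(\frak p)$-invariance.

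The bracket estimate $[\frak p_+,\frak g^{(i',*)}]\subset\frak g^{(i'+1,*)}$ is the same computation with the sharper input $k'\geq 1$ from Proposition~\ref{prop2.1}(ii), which yields $k'+j'\geq i'+1$. Consequently $\frak p_+$ acts trivially on the quotient $\Bbb V_{i'}=\frak g^{(i',*)}/\frak g^{(i'+1,*)}$, and since $P_+=\exp(\frak p_+)$, the subgroup $P_+$ acts trivially as well.

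The heart of the matter is the $Q$-invariance of $\Bbb V^{i''}_{i'}$. Writing $\frak W=\bigoplus_{j'\geq i',\,j''\geq i''}\frak g_{(j',j'')}$, the preimage of $\Bbb V^{i''}_{i'}$ under $\frak g^{(i',*)}\to\Bbb V_{i'}$ is $\frak W+\frak g^{(i'+1,*)}$, and $\Bbb V^{i''}_{i'}$ is $Q$-invariant iff this preimage is. As $\frak g^{(i'+1,*)}$ is already $\Ad(Q)$-invariant (being $\Ad(P)$-invariant and $Q\subset P$), it suffices to prove $[\frak q,\frak W]\subset\frak W+\frak g^{(i'+1,*)}$. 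By Proposition~\ref{prop2.1}(iii) an element of $\frak q$ lies in some $\frak g_{(k',k'')}$ with $k',k''\geq 0$, so for $Y\in\frak g_{(j',j'')}$ with $j'\geq i'$, $j''\geq i''$ the bracket lands in $\frak g_{(k'+j',k''+j'')}$. I expect the one delicate point to be the boundary case in this computation: if $k'+j'\geq i'+1$ the bracket lies in $\frak g^{(i'+1,*)}$, whereas $k'+j'=i'$ forces $k'=0$ and $j'=i'$, and then $k''\geq 0$ gives $k''+j''\geq i''$, placing the bracket in $\frak W$. This establishes $\ad(\frak q)$-invariance, hence $\Ad(Q)$-invariance by the group-level remark. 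Finally, the filtration property $\Bbb V^{i''}_{i'}\supset\Bbb V^{\ell''}_{i'}$ for $i''\leq\ell''$ is immediate from the nesting of the defining subspaces. Thus the main obstacle is purely bookkeeping, namely handling the case $k'+j'=i'$ correctly, since this is exactly where the second filtration index interacts with $\frak q_+$.
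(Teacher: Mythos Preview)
Your infinitesimal computations are correct, but you make the $Q$-invariance step harder than it is: since $k',k''\geq 0$ and $j'\geq i'$, $j''\geq i''$, additivity gives $k'+j'\geq i'$ \emph{and} $k''+j''\geq i''$ at once, so $[\frak q,\frak W]\subset\frak W$ with no case split. There is no ``delicate boundary case''; the subspace $\frak W$ itself is $\ad(\frak q)$-invariant, which is exactly what the paper asserts.

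Where your argument does have a weakness is the passage to the group level. The assertion that an arbitrary $\ad(\frak p)$-invariant subspace is automatically $\Ad(P)$-invariant is not a standard fact and is false in general for disconnected $P$; what makes it work here is the special form of the subspaces (sums of grading or bigrading components) together with the Levi decomposition $P=P_0\cdot P_+$, where $P_+=\exp(\frak p_+)$ is connected and $P_0$ preserves the grading by definition. The paper takes a different and more robust route that bypasses connectedness entirely: it notes that the filtration components $\frak g^{(i',*)}$ can be recovered algebraically from $\frak p$ alone (as the nilradical, the terms of its derived series, and their Killing-form annihilators, cf.\ \cite[Corollary~3.2.1]{book}), so every element of $N_G(\frak p)\supset P$ must preserve them. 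The same reasoning applied to $\frak q$ handles the $Q$-invariance. Your approach, once the Levi-decomposition step is spelled out, is more hands-on and stays closer to Proposition~\ref{prop2.1}; the paper's approach buys invariance under the full normalizer in one stroke.
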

\begin{proof}
 The grading of $\frak g$ induced by $\frak p$ induces a filtration, whose
 components by construction are exactly the space $\frak g^{(i',*)}$. It is well
 known this components can be obtained algebraically from~$\frak p$ as the
 nilradical, the elements of its derived series and annihilators under the Killing
 form, compare with \cite[Corollary 3.2.1]{book}. Hence, any element of $N_G(\frak
 p)$ normalizes each of the~filtration components which shows that each $\frak
 g^{(i',*)}$ is $P$-invariant. In particular, $\frak p_+=\frak g^{(1,*)}$ and the
 second claim follows from Proposition \ref{prop2.1}. Applying this argument to
 $Q$, we conclude that each of the spaces $\bigoplus_{j'\geq i',j''\geq i''}\frak
 g_{(i',j')}$ is $Q$-invariant and from this the second part follows readily.
\end{proof}

\subsection{Some basic relative tractor bundles}\label{2.3}
We can now obtain the distinguished relative tractor bundles by passing to associated
bundles. Recall that, given a parabolic geometry $(p\colon \Cal G\to M,\om)$, the
associated bundle induced by the restriction of the adjoint representation of $G$ to
$Q$ is the \textit{adjoint tractor bundle} $\Cal AM=\Cal G\x_Q\frak g$. By
Proposition \ref{prop2.2}, we see that for each $i'$, we get a natural subbundle
corresponding to the subspace $\frak g^{(i',*)}\subset\frak g$ which is $P$-invariant
and hence $Q$ invariant. We denote this subbundle by~$\Cal A^{(i',*)}M\subset\Cal
AM$. Of course we can then form the quotient $\Cal V_{i'}M:=\Cal A^{(i',*)}M/\Cal
A^{(i'+1,*)}M$ which is isomorphic to \smash{$\Cal G\x_Q\bigl(\frak g^{(i',*)}/\frak
 g^{(i'+1,*)}\bigr)=\Cal G\x_Q\Bbb V_{i'}$}. Hence, by Proposition \ref{prop2.2}, each
$\Bbb V_{i'}$ is a relative tractor bundle. Moreover, the $Q$-invariant subspaces
$\Bbb V_{i'}^{i''}\subset\Bbb V_{i'}$ give rise to a filtration of $\Cal V_{i'}M$ by
smooth subbundles $\Cal V_{i'}^{i''}M$, each of which is a relative natural bundle.

In what follows, we will mainly be interested in the case where $i'<0$. The relevance
of this condition is that by construction $\frak g^{(0,*)}=\frak p\supset\frak q$. It
is well known that, via the Cartan connection~$\om$, the associated bundle $\Cal
G\x_Q\frak g/\frak q$ is isomorphic to $TM$. Now the subspace $\frak p\subset\frak g$
is $P$-invariant and hence $Q$-invariant, so $\frak p/\frak q\subset\frak g/\frak q$
is a $Q$-invariant subspace, too. Hence, it defines a natural subbundle in $TM$, which
is called the \textit{relative tangent bundle} $T_\rho M$ in \cite{Rel-BGG2}. For
$i'<0$, we get in the same way a $Q$-invariant subspace $\frak g^{(i',*)}/\frak q$
which gives rise to a smooth subbundle of $TM$ that contains $T_\rho M$. We denote
this bundle by $T^{i'}_P M$ to distinguish it form the filtration component coming
from the initial parabolic geometry. By construction, we then can naturally identify
$\Cal V_{-1}M$ with $T^{-1}_PM/T_\rho M$ and, for $i'<-1$, $\Cal
V_{i'}M=T^{i'}_PM/T^{i'+1}_PM$, so we always obtain natural subquotients of the
tangent bundle. This will be crucial for the later developments. In what follows it
will be convenient to define $T^0_PM:=T_\rho M$, which in particular allows us to
uniformly write $\Cal V_{i'}M=T^{i'}_PM/T^{i'+1}_PM$ for any $i'\leq -1$.

A particularly important special case is that the relative tangent bundle $T_\rho
M\subset TM$ is an involutive distribution on $M$. In this case, we can find
\textit{local leaf spaces} for this distribution. More explicitly, for any $x\in M$
there is an open neighborhood $U\subset M$ of $x$ and a surjective submersion
$\ps\colon U\to N$ onto some smooth manifold $N$ such that, for any $y\in U$,
$\ker(T_y\ps)$ coincides with the fiber of $T_\rho M$ at $y$. In particular, this
implies that $T\ps$ induces an isomorphism $(TM/T_\rho
M)|_U\cong\psi^*TN$. Characterizing involutivity of $T_\rho M$ and the question of
which part of the given geometric structure on $M$ descends to local leaf spaces is a
key part of the theory of twistor spaces for parabolic geometries that was originally
developed in \cite{twistor}. Recall that the curvature of the Cartan connection $\om$
can be interpreted as a two-form $\ka\in\Om^2(M,\Cal AM)$ and since $TM$ naturally is
a quotient of $\Cal AM$, there is a projection $\tau\in\Om^2(M,TM)$ of $\ka$, called
the \textit{torsion} of the Cartan connection $\om$. By \cite[Proposition 4.6]{Rel-BGG2},
involutivity of $T_\rho M$ is equivalent to the fact that $\tau$
maps $T_\rho M\x T_\rho M$ to $T_\rho M$.

There is a strengthening of this condition that plays an important role in the theory
of relative BGG sequences, namely that $\ka$ vanishes on $T_\rho M\x T_\rho M$. (In
\cite{Rel-BGG2}, this is phrased as vanishing of the relative curvature.) By \cite[Theorem 4.11]{Rel-BGG2},
this implies that the relative BGG sequence determined by
any relative tractor bundle is a complex and a fine resolution of a sheaf on
$M$. This sheaf can be identified with the sheaf of those local sections of the
relative tractor bundle which are parallel for the relative tractor connection, which
will be discussed in Section~\ref{3.1} below. It is shown in \cite[Section 4.7]{Rel-BGG2}
that this implies that locally the resolved sheaf can be written as a pullback of a
sheaf on a local leaf space. In the special case of a so-called correspondence space
(which means that the geometry descends to the leaf space), that sheaf is identified
explicitly in \cite[Theorem 4.11]{Rel-BGG2}. In general, this sheaf is not
identified in \cite{Rel-BGG2}, we will present a solution to this question under some
assumptions below.

\section{Relative tractor connections and resolved sheaves}\label{3}

\subsection{Restrictions of the adjoint tractor connection}\label{3.1}
A crucial feature of tractor bundles is that they carry canonical linear connections
known as tractor connections. There is a uniform description of these tractor
connections, based on the so-called fundamental derivative. For a geometry $(p\colon \Cal
G\to M,\om)$ of type $(G,Q)$ consider any representation $\Bbb W$ of $Q$ and the
corresponding natural vector bundle $\Cal WM:=\Cal G\x_Q\Bbb W$. Then smooth sections
of $\Cal WM$ are in bijective correspondence with smooth functions $\Cal G\to\Bbb W$
which are $Q$-equivariant in the sense that $f(u\cdot g)=g^{-1}\cdot f(u)$ for any
$g\in Q$, where in the left-hand side we use the principal right action of $Q$ on
$\Cal G$ and in the right-hand side the given representation on $\mathbb{W}$. In this way,
sections of the adjoint tractor bundle $\Cal AM$ get identified with $Q$-equivariant
smooth functions $f\colon \Cal G\to\frak g$. But via the Cartan connection $\om$, smooth
functions $\Cal G\to\frak g$ are in bijective correspondence with vector fields on
$\Cal G$, and the equivariancy condition on $f$ is equivalent to the corresponding
vector field $\xi\in\frak X(\Cal G)$ being invariant under the principal right action
of $Q$, i.e., $(r^g)^*\xi=\xi$ for any $g\in Q$.

Differentiating an equivariant function with respect to an invariant vector field
produces an equivariant function, so we can view this as defining an operator
$D\colon \Gamma(\Cal AM)\x \Ga(\Cal WM)\to\Ga(\Cal WM)$ called the \textit{fundamental
 derivative}, see \cite[Section 1.5.8]{book}. This has strong invariance
properties and to emphasize the analogy to a covariant derivative, it is usually
written as~${(s,\si)\!\mapsto\! D_s\si}$.

Now for a tractor bundle, we start with a representation $\Bbb V$ of $G$, so the
infinitesimal representation defines a bundle map $\bullet\colon \Cal AM\x\Cal VM\to\Cal
VM$ and we denote the corresponding tensorial map on sections by the same symbol. In
the case that $\Cal VM=\Cal AM$, the infinitesimal representation is just the adjoint
representation of $\frak g$, so for $s,t\in\Ga(\Cal AM)$, we get $s\bullet t=\{s,t\}$
where $\{\, ,\, \}$ is induced by the Lie bracket on $\frak g$. Recall from Section
\ref{2.3} that there is a natural projection $\Pi\colon \Cal AM\to TM$. The induced
operation on sections relates nicely to the picture of $Q$-invariant vector fields
since any such vector field is projectable, and for $s\in\Ga(\Cal AM)$,
$\Pi(s)\in\frak X(M)$ is just the projection of the $Q$-invariant vector field
corresponding to $s$. It turns out that for a tractor bundle $\Cal VM$, $s\in\Ga(\Cal
AM)$ and $\si\in\Ga(\Cal VM)$ the combination $D_s\si+s\bullet\si$ depends only on
$\Pi(s)$ so this descends to an operation $\Ga(TM)\x\Ga(\Cal VM)\to\Ga(\Cal VM)$, see
again \cite[Section 1.5.8]{book}. This turns out to be a linear connection, the
\textit{tractor connection} $\nabla^{\Cal V}$. In particular, we get the defining
equation for the adjoint tractor connection: $\nabla^{\Cal
 A}_{\Pi(s)}t=D_st+\{s,t\}$.

While things are not formulated in this way in \cite{Rel-BGG2} (see the proof of
Theorem~\ref{thm3.1} for details), relative tractor connections can be constructed in
a very similar fashion. Suppose we have a~bundle $\Cal VM$ that is induced by a
representation of $P$, where $Q\subset P\subset G$ as in Section~\ref{2.2}. Then for any
$g\in Q$, the subspace $\frak p\subset\frak g$ is invariant under $\Ad(g)\colon \frak
g\to\frak g$, so it gives rise to a smooth subbundle in $\Cal AM$ that is denoted by
$\Cal A^{\frak p}M$ in \cite{Rel-BGG2}. The infinitesimal representation again
induces a bundle map $\bullet\colon \Cal A^{\frak p}M\x\Cal VM\to\Cal VM$. As before, we
conclude that the operator $\Ga(\Cal A^{\frak p}M)\x\Ga(\Cal VM)\to \Ga(\Cal VM)$
defined by $(s,\si)\mapsto D_s\si+s\bullet\si$ depends only on $\Pi(s)$, but now~$\Pi(s)$ is a section of $\Cal G\x_Q(\frak p/\frak q)=T_\rho M$. Hence, we do not obtain
a full linear connection but only a~\textit{partial connection} $\nabla^{\rho,\Cal V}\colon
\Ga\bigl(T_\rho M\bigr)\x\Ga(\Cal VM)\to\Ga(\Cal VM)$,
which still is linear over smooth
functions in the first argument and satisfies the Leibniz rule in the second
argument.

In the special case of the relative tractor bundles $\Cal V_{i'}M$ constructed in Section~\ref{2.3}, we prove that the relative tractor connection is induced by the adjoint
tractor connection.

\begin{Theorem}\label{thm3.1}
In the setting of Section~$\ref{2.3}$, consider the subbundles $\Cal A^{(i',*)}M$. Then for
${s\in\Ga\bigl(\Cal A^{(0,*)}M\bigr)}$ and $t\in\Ga\bigl(\Cal A^{(i',*)}M\bigr)$ with $i'\leq 0$, we get
\smash{$\nabla^{\Cal A}_{\Pi(s)}t\in\Ga\big(\Cal A^{(i',*)}\big)M$}. Hence, the adjoint tractor
connection restricts to an operation $\Ga\bigl(T_\rho M\bigr)\x\Ga\bigl(T^{i'}_PM\bigr)\to\Ga\bigl(T^{i'}_PM\bigr)$
for any $i'<0$.

Thus, there is an induced partial connection on $\Ga(\Cal V_{i'}M)$ and this
coincides with the relative tractor connection $\nabla^{\rho,\Cal V_{i'}}$ as
obtained in {\rm \cite[{\it Section} 4.3]{Rel-BGG2}}.
\end{Theorem}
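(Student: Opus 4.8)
The plan is to deduce the final assertion from the first part of the theorem together with the fundamental-derivative description of tractor connections recalled above. Throughout I write $q\colon\Cal A^{(i',*)}M\to\Cal V_{i'}M$ for the canonical projection onto the quotient $\Cal V_{i'}M=\Cal A^{(i',*)}M/\Cal A^{(i'+1,*)}M$, which is induced by the $Q$-equivariant quotient map $\frak g^{(i',*)}\to\Bbb V_{i'}$. For $s\in\Ga\bigl(\Cal A^{(0,*)}M\bigr)$, so that $\Pi(s)\in\Ga(T_\rho M)$, and $\si\in\Ga(\Cal V_{i'}M)$ I would define the induced operation by choosing a lift $t\in\Ga\bigl(\Cal A^{(i',*)}M\bigr)$ with $q(t)=\si$ and setting $\nabla_{\Pi(s)}\si:=q\bigl(\nabla^{\Cal A}_{\Pi(s)}t\bigr)$; by the first part, $\nabla^{\Cal A}_{\Pi(s)}t\in\Ga\bigl(\Cal A^{(i',*)}M\bigr)$, so the right-hand side is a well-defined section of $\Cal V_{i'}M$.

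First I would check that the result is independent of the chosen lift $t$. Two lifts differ by a section $t_0\in\Ga\bigl(\Cal A^{(i'+1,*)}M\bigr)$, and since $i'<0$ gives $i'+1\leq 0$, the first part of the theorem applied with $i'+1$ in place of $i'$ yields $\nabla^{\Cal A}_{\Pi(s)}t_0\in\Ga\bigl(\Cal A^{(i'+1,*)}M\bigr)$, whence $q\bigl(\nabla^{\Cal A}_{\Pi(s)}t_0\bigr)=0$. Linearity over smooth functions in $\Pi(s)$ and the Leibniz rule in $\si$ are inherited from the corresponding properties of $\nabla^{\Cal A}$ and the linearity of $q$, so the operation is a partial connection along $T_\rho M$.

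Next I would identify this partial connection with $\nabla^{\rho,\Cal V_{i'}}$. Applying $q$ to the defining equation $\nabla^{\Cal A}_{\Pi(s)}t=D_st+\{s,t\}$, I treat the two terms separately. For the first term, the fundamental derivative is natural with respect to the bundle map induced by a $Q$-equivariant linear map, since in terms of equivariant functions on $\Cal G$ it is just differentiation along the invariant vector field determined by $s$; hence $q(D_st)=D_s\si$. For the second term, recall from Proposition~\ref{prop2.2} that $\Bbb V_{i'}=\frak g^{(i',*)}/\frak g^{(i'+1,*)}$ is exactly the $P$-representation underlying $\Cal V_{i'}M$, with infinitesimal $\frak p$-action given by the bracket; since $s$ is a section of $\Cal A^{(0,*)}M=\Cal A^{\frak p}M$, this is precisely the statement that $q(\{s,t\})=s\bullet\si$. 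Combining the two gives $q\bigl(\nabla^{\Cal A}_{\Pi(s)}t\bigr)=D_s\si+s\bullet\si$, which is the formula for the relative tractor connection recalled before the theorem; in particular the right-hand side depends only on $\Pi(s)$, which also confirms independence of the lift $s$.

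The main obstacle I anticipate is not this computation but the final matching of conventions: the relative tractor connection in \cite[Section~4.3]{Rel-BGG2} is not constructed through the fundamental derivative, so one must verify that the expression $D_s\si+s\bullet\si$ actually reproduces the connection defined there. I would settle this once and for all at the level of an arbitrary relative tractor bundle $\Cal VM$ induced by a completely reducible representation of $P$, by unwinding both descriptions in terms of $Q$-equivariant functions on $\Cal G$ and the Cartan connection $\om$ and checking that the characterizing properties (linearity over functions, the Leibniz rule, and dependence only on $\Pi(s)$) single out the same operator. Granting this identification, the displayed computation specializes it to $\Cal V_{i'}M$ and completes the proof.
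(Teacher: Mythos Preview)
Your argument for the final assertion follows the paper's line closely: both pass to the quotient using the first part, invoke naturality of the fundamental derivative for the $D_st$ term and the bracket-induced $\frak p$-action for the $\{s,t\}$ term, and arrive at $D_s\si+s\bullet\si$. Two points, however, need attention.

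First, you take the opening claim $\nabla^{\Cal A}_{\Pi(s)}t\in\Ga\bigl(\Cal A^{(i',*)}M\bigr)$ as input, but it is part of the theorem and the paper does prove it: $D_st$ stays in $\Ga\bigl(\Cal A^{(i',*)}M\bigr)$ by naturality of the fundamental derivative with respect to natural subbundles (\cite[Proposition~1.5.8]{book}), and $\{s,t\}$ stays there because $\bigl[\frak g^{(0,*)},\frak g^{(i',*)}\bigr]\subset\frak g^{(i',*)}$ (Proposition~\ref{prop2.2} together with $\frak p=\frak g^{(0,*)}$). This is short but should be included.

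Second, your proposed route to the matching with \cite[Section~4.3]{Rel-BGG2} via ``characterizing properties (linearity over functions, the Leibniz rule, and dependence only on $\Pi(s)$)'' cannot succeed as stated: those properties are shared by \emph{every} partial connection along $T_\rho M$, so they do not single out a specific operator. The paper's proof does something more concrete. It recalls that in \cite{Rel-BGG2} one first restricts $D$ to $s\in\Ga\bigl(\Cal A^{(0,*)}M\bigr)$, observes that $D_s\si=0$ whenever $s\in\Ga\bigl(\Cal A^{(1,*)}M\bigr)$ (because $P_+$ acts trivially on the relative natural bundle), and thereby obtains a \emph{relative} fundamental derivative $D^\rho$ defined on sections of $\Cal A_\rho M=\Cal A^{(0,*)}M/\Cal A^{(1,*)}M$; the relative tractor connection is then $\nabla^{\rho,\Cal V}_{\Pi(\underline{s})}\si=D^\rho_{\underline{s}}\si+\underline{s}\bullet\si$ for $\underline{s}\in\Ga(\Cal A_\rho M)$. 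The matching is then the direct check that, for your lift $s$ with image $\underline{s}$, the class of $D_st$ in $\Cal V_{i'}M$ equals $D^\rho_{\underline{s}}\si$, and the class of $\{s,t\}$ equals $\underline{s}\bullet\si$ (the latter using $\bigl[\frak g^{(1,*)},\frak g^{(i',*)}\bigr]\subset\frak g^{(i'+1,*)}$). Replace the appeal to characterizing properties by this explicit comparison and your proof is complete and coincides with the paper's.
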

\begin{proof}
Consider the defining formula $\nabla^{\Cal A}_{\Pi(s)}t=D_st+\{s,t\}$ for
$s\in\Ga\bigl(\Cal A^{(0,*)}M\bigr)$ and $t\in\Ga\smash{\bigl(\Cal A^{(i',*)}M\bigr)}$. By \cite[Proposition 1.5.8]{book},
the fundamental derivative preserves natural subbundles, so the first
summand in the right-hand side is a section of $\Cal A^{(i',*)}M$. On the other hand,
Proposition \ref{prop2.2} together with $\frak p=\frak g^{(0,*)}$ shows that $\bigl[\frak
 g^{(0,*)},\frak g^{(i',*)}\bigr]\subset \frak g^{(i',*)}$, so the second summand of the
right-hand side of the defining formula is a section of $\Cal A^{(i',*)}M$, too.

Fixing $i'<0$, we have thus obtained a partial connection on the bundle $\Cal
A^{(i',*)}M$ for which the subbundle $\Cal A^{(i'+1,*)}M$ is parallel, so it descends
to a well-defined partial connection on the quotient bundle $\Cal V_{i'}M=\Cal
A^{(i',*)}M/\Cal A^{(i'+1,*)}M$. Thus, it remains to show that this coincides with the
relative tractor connection. The construction of the latter in \cite{Rel-BGG2} was
based on a~variant of the fundamental derivative for relative natural bundles: One
defines the relative adjoint tractor bundle $\Cal A_\rho M=\Cal G\x_Q(\frak p/\frak
p_+)$, so in our notation, this is just $\Cal A^{(0,*)}M/\Cal A^{(1,*)}M$. Given
a~representation $\Bbb W$ of $Q$ on which $P_+$ acts trivially, one can first restrict
the fundamental derivative $D_s\si$ to $s\in\Ga\bigl(\Cal A^{(0,*)}M\bigr)$ and then observe
that $D_s\si=0$ if $s\in\Ga\bigl(\Cal A^{(1,*)}M\bigr)$, to obtain a~well-defined operator
$D^\rho\colon \Ga(\Cal A_\rho M)\x\Ga(\Cal WM)\to\Ga(\Cal WM)$. If $\Cal VM$ is a relative
tractor bundle, then $\Bbb V$ is the restriction of a representation of $P$, so the
infinitesimal representation descends to an operation $\bullet\colon \frak p/\frak
p_+\x\Bbb V\to\Bbb V$. Similarly as above, the relative tractor connection is then
characterized by $\nabla^{\rho,\Cal V}_{\Pi(s)}\si=D^\rho_s \si+s\bullet\si$, where
now $\Pi$ is induced by the canonical projection~${\frak p/\frak p_+\to\frak p/\frak
q}$.

But from this description the claim follows easily: Take sections $\si\in\Ga(\Cal
V_{i'}M)$ and $\xi\in T_\rho M$. For our construction, we choose representative
sections \smash{$t\in\Ga\bigl(\Cal A^{(i',*)}M\bigr)$} for $\si$ and \smash{$s\in\Ga\bigl(\Cal A^{(0,*)}M\bigr)$} for
$\xi$ and project \smash{$D_s t+\{s,t\}\in\Ga\bigl(\Cal A^{(i',*)}M\bigr)$} to the quotient bundle
$\Cal V_{i'}M$. Denoting by $\underline{s}$ the projection of $s$ to $\Cal A_\rho M$
the fact that $\frak p_+$ acts trivially on $\frak g^{(i',*)}/\frak g^{(i'+1,*)}$
implies that the projection of $D_s t$ to $\Ga(\Cal V_{i'}M)$ depends only on
$\underline{s}$ and then by construction it coincides with $D^\rho_{\underline
 s}\si$. Similarly, since $\bigl[\frak g^{(1,*)},\frak g^{(i',*)}\bigr]\subset\frak
g^{(i'+1,*)}$ we conclude that the projection of $\{s,t\}$ to $\Ga(\Cal V_{i'}M)$
depends only on $\underline{s}$ and coincides with $\underline{s}\bullet\si$.
\end{proof}

\begin{Remark}\label{rem3.1}
 The description of a relative tractor connection as induced by the adjoint tractor
 connection leads to explicit descriptions in terms of the structure underlying a
 parabolic geometry of type $(G,Q)$. This works via the concept of \textit{Weyl
 structures} which was introduced in~\cite{Weyl}, see also \cite[Section 5]{book}.
 Any parabolic geometry carries a family of Weyl structures which form
 an affine space modelled on the space $\Om^1(M)$ of one-forms on the manifold
 $M$. Any Weyl structure provides a~linear connection on any natural vector
 bundle called the Weyl connection. Once Weyl connections are understood there is a
 general theory providing explicit formulae for tractor connections, see \cite[Section 5.1]{book}.
\end{Remark}

\subsection{The relation to the Lie bracket}\label{3.2}
As we have noted above, sections of the adjoint tractor bundle $\Cal AM$ can be
identified with $Q$-invariant vector fields on the total space $\Cal G$ of the Cartan
bundle. Now the subspace of $Q$-invariant vector fields is closed under the Lie
bracket of vector fields, whence we obtain an induced bilinear operation on $\Ga(\Cal
AM)$, which we denote by the usual symbol $[\, ,\, ]$. As we have noted in Section
\ref{3.1}, for $s\in\Ga(\Cal AM)$, $\Pi(s)\in\frak X(M)$ is the projection of the
$Q$-invariant vector field corresponding to $s$. Hence, standard properties of the Lie
bracket imply that for $s,t\in\Ga(\Cal AM)$, we get $\Pi([s,t])=[\Pi(s),\Pi(t)]$ with
the Lie bracket of vector fields in the right-hand side.

It is rather easy to express the Lie bracket of sections of $\Cal AM$ in terms of the
operations introduced above and of the curvature $\ka$ of the Cartan connection $\om$,
see \cite[Corollary 1.5.8]{book}: For $s,t\in\Ga(\Cal AM)$, one has
\begin{equation}\label{bracket}
[s,t]=D_st-D_ts-\ka(\Pi(s),\Pi(t))+\{s,t\}.
\end{equation}
Using this and Theorem~\ref{thm3.1}, we can now give an alternative description of
the relative tractor connections on the relative tractor bundles $\Cal
V_{i'}M$. Recall from Section~\ref{2.3} that we have subbundles $TM\supset
T^{i'}_PM\supset T_\rho M$ such that $\Cal V_{-1}M=T^{-1}_P M/T_\rho M$ and $\Cal
V_{i'}M=T^{i'}_PM/T^{i'+1}_PM$ for~${i'<-1}$. Hence, for a smooth section $\si$ of $\Cal
V_{i'}M$ one can always chose a representative vector field in $\Ga\bigl(T^{i'}_PM\bigr)$ that
projects onto $\si$.

\begin{Theorem}\label{thm3.2.1}
In the setting of Section~$\ref{2.3}$, consider vector fields $\xi\in\Ga\bigl(T_\rho M\bigr)$,
$\eta\in\Ga\bigl(T^{i'}_PM\bigr)$ and a section $\si\in\Ga(\Cal V_{i'}M)$. Let
$\tau\in\Om^2(M,TM)$ be the torsion of the Cartan connection $\om$.
\begin{itemize}\itemsep=0pt
\item[$(1)$] The vector field $[\xi,\eta]+\tau(\xi,\eta)$ is a section of the subbundle
$T^{i'}_PM\subset TM$.

\item[$(2)$] If $\eta$ is a representative for $\si$, then the projection of
$[\xi,\eta]+\tau(\xi,\eta)$ to $\Ga(\Cal V_{i'}M)$ depends only on $\si$ $($and not on
the choice of $\eta)$ and coincides with \smash{$\nabla^{\rho,\Cal V_{i'}}_\xi\si$}.
\end{itemize}
\end{Theorem}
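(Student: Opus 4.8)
The plan is to lift $\xi$ and $\eta$ to sections of the adjoint tractor bundle and then push the bracket identity \eqref{bracket} down to $TM$ and further to $\Cal V_{i'}M$ via the projection $\Pi$. Since $\Pi$ restricts to a surjective bundle map $\Cal A^{(0,*)}M\to T_\rho M$ and to a surjective bundle map $\Cal A^{(i',*)}M\to T^{i'}_PM$, I would first choose $s\in\Ga\bigl(\Cal A^{(0,*)}M\bigr)$ with $\Pi(s)=\xi$ and $t\in\Ga\bigl(\Cal A^{(i',*)}M\bigr)$ with $\Pi(t)=\eta$. By the identifications of Section~\ref{2.3}, the projection of $t$ to $\Cal V_{i'}M$ is exactly $\si$. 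Rewriting \eqref{bracket} as $D_st+\{s,t\}=[s,t]+D_ts+\ka(\Pi(s),\Pi(t))$ and recognising the left-hand side as $\nabla^{\Cal A}_\xi t$ via the defining formula for the adjoint tractor connection is the starting point.

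Applying $\Pi$ and using both $\Pi([s,t])=[\Pi(s),\Pi(t)]=[\xi,\eta]$ and $\tau=\Pi\o\ka$ gives
\[
\Pi\bigl(\nabla^{\Cal A}_\xi t\bigr)=[\xi,\eta]+\Pi(D_ts)+\tau(\xi,\eta),
\]
so that $[\xi,\eta]+\tau(\xi,\eta)=\Pi\bigl(\nabla^{\Cal A}_\xi t\bigr)-\Pi(D_ts)$. For part~(1), I would argue that both terms on the right lie in $T^{i'}_PM$: by Theorem~\ref{thm3.1} one has $\nabla^{\Cal A}_\xi t\in\Ga\bigl(\Cal A^{(i',*)}M\bigr)$, whose image under $\Pi$ is $T^{i'}_PM$, while $D_ts\in\Ga\bigl(\Cal A^{(0,*)}M\bigr)$ because the fundamental derivative preserves natural subbundles, so that $\Pi(D_ts)\in\Ga(T_\rho M)\subset\Ga\bigl(T^{i'}_PM\bigr)$.

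For part~(2), I would project the same identity one step further to $\Cal V_{i'}M=T^{i'}_PM/T^{i'+1}_PM$. The decisive point is that for $i'<0$ one has $T_\rho M=T^0_PM\subset T^{i'+1}_PM$, whence the term $\Pi(D_ts)\in\Ga(T_\rho M)$ projects to zero. The projection of $\Pi\bigl(\nabla^{\Cal A}_\xi t\bigr)$ coincides with the projection of $\nabla^{\Cal A}_\xi t$ under the identification $\Cal V_{i'}M=\Cal A^{(i',*)}M/\Cal A^{(i'+1,*)}M$, and by Theorem~\ref{thm3.1} this is $\nabla^{\rho,\Cal V_{i'}}_\xi\si$. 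Hence the projection of $[\xi,\eta]+\tau(\xi,\eta)$ equals $\nabla^{\rho,\Cal V_{i'}}_\xi\si$, which visibly depends only on $\si$, giving the asserted independence of the representative $\eta$.

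The main obstacle I anticipate is the careful bookkeeping of filtration degrees that makes the spurious term $\Pi(D_ts)$ disappear: one has to check both that $D_ts$ stays in the $\frak p$-component $\Cal A^{(0,*)}M$, so that $\Pi(D_ts)$ is valued in $T_\rho M$, and that $T_\rho M\subset T^{i'+1}_PM$ holds exactly for $i'<0$, which is what kills this term modulo $T^{i'+1}_PM$. A secondary subtlety is to match the two realisations of $\Cal V_{i'}M$, as a subquotient of $\Cal AM$ and as a subquotient of $TM$, so that the projection used here agrees with the one appearing in Theorem~\ref{thm3.1}.
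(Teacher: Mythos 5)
Your proposal is correct and follows essentially the same route as the paper: choose lifts $s,t$ of $\xi,\eta$ to $\Ga\bigl(\Cal A^{(0,*)}M\bigr)$ and $\Ga\bigl(\Cal A^{(i',*)}M\bigr)$, apply $\Pi$ to \eqref{bracket}, use naturality of the fundamental derivative to place $\Pi(D_ts)$ in $\Ga\bigl(T_\rho M\bigr)$, and invoke Theorem~\ref{thm3.1} to identify the remaining term with $\nabla^{\rho,\Cal V_{i'}}_\xi\si$ after the further projection kills $\Pi(D_ts)$. The bookkeeping you flag (that $T_\rho M=T^0_PM\subset T^{i'+1}_PM$ for $i'\leq -1$) is exactly the point the paper relies on as well.
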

\begin{proof}
We can choose smooth representatives $s\in\Ga\bigl(\Cal A^{(0,*)}M\bigr)$ for $\xi$ and
$t\in\Ga\bigl(\Cal A^{(i',*)}M\bigr)$ for $\eta$, so $\Pi(s)=\xi$ and $\Pi(t)=\eta$. As we have
noted above, this implies $\Pi([s,t])=[\xi,\eta]$, so applying $\Pi$ to formula
\eqref{bracket} we obtain
\[
[\xi,\eta]=\Pi(D_st+\{s,t\})+\Pi(D_ts)-\Pi(\ka(\xi,\eta)).
\]
The last summand in the right-hand side by definition is $-\tau(\xi,\eta)$, while
naturality of the fundamental derivative implies that $D_t s\in\Ga\bigl(\Cal A^{(0,*)}M\bigr)$
and hence $\Pi(D_ts)\in\Ga\bigl(T_\rho M\bigr)$. From Theorem~\ref{thm3.1}, we know that the
first term in the right-hand side is \smash{$\Pi\bigl(\nabla^{\Cal AM}_\xi t\bigr)$} and that this is a~section
of $T^{i'}_PM$, so (1) follows. Theorem~\ref{3.1} also tells us that, if
$\eta$ represents $\si$, then projecting \smash{$\Pi\bigl(\nabla^{\Cal AM}_\xi t\bigr)$} further to
$\Ga(\Cal V_{i'}M)$, we obtain \smash{$\nabla^{\rho,\Cal V_{i'}}_\xi\si$}. Since $\Pi(D_ts)$
lies in the kernel of the latter projection, we obtain (2).
\end{proof}

The relation to the Lie bracket is also crucial for obtaining an interpretation of
the relative tractor bundles introduced in Section~\ref{2.3} in the case that the relative
tangent bundle $T_\rho M$ is involutive (and additional assumptions are
satisfied). As observed in Section~\ref{2.3}, if $T_\rho M$ is involutive, then locally
around any $x\in M$, we can find a local leaf space $\psi\colon U\to N$. This means that
$U\subset M$ is open with $x\in U$ and $\ps$ is a surjective submersion onto a smooth
manifold such that for each $y\in U$ the kernel $\ker(T_y\psi)\subset T_y M$
coincides with the fiber of the relative tangent bundle in $y$. In particular, this
implies that we obtain an isomorphism $TU/T_\rho U\cong\psi^* TN$ induced by
$T\psi|_U$. Hence, there is the hope that the subbundles $T^{i'}_PM\subset TM$ that
contain $T_\rho M$ descend to subbundles in $TN$. We can nicely characterize when
this happens in terms of the torsion of the Cartan geometry. Under stronger
assumptions (which are satisfied in important examples), we can nicely characterize
parallel sections for the relative tractor connection.

\begin{Theorem}\label{thm3.2.2}
In the setting of Section~$\ref{2.3}$, assume that $T_\rho M$ is involutive and consider a
local leaf space $\ps\colon U\to N$ with connected fibers. Fix an index $i'$ and consider
the bundles $T^{i'}_PM\subset TM$ and $T^{i'}_PM/T_\rho M\subset TM/T_\rho M$.
\begin{enumerate}\itemsep=0pt
\item[$(1)$] There is a smooth subbundle $T^{i'}N\subset TN$ that corresponds to
$T^{i'}_PM/T_\rho M$ under the isomorphism $TM/T_\rho M\cong\psi^*TN$ induced by
$T\psi$ if and only if the torsion $\tau$ satisfies $\tau\bigl(T_\rho M,T^{i'}_PM\bigr)\subset
T^{i'}_PM$.

\item[$(2)$] If the condition in $(1)$ is satisfied, then $T\psi$ induces an isomorphism from $\Cal
V_{i'}M|_U$ to $\psi^*\bigl(T^{i'}N/T^{i'+1}N\bigr)$. Assuming in addition that $\tau\bigl(T_\rho
M,T^{i'}_PM\bigr)\subset T^{i'+1}_PM\subset T^{i'}_PM$, a section of $\Cal V_{i'}M|_U$ is
parallel for the relative tractor connection $\nabla^{\rho,\Cal V_{i'}}$ if and only if it
is the pullback of a section of $T^{i'}N/T^{i'+1}N$.
\end{enumerate}
\end{Theorem}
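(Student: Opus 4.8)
The plan is to reduce the whole statement to the Lie-bracket description of $\nabla^{\rho,\Cal V_{i'}}$ from Theorem~\ref{thm3.2.1}, combined with the standard fact that a distribution containing the vertical bundle of a submersion descends to the base precisely when it is stable under bracketing with vertical fields.

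For part~(1), I would first record the foliation criterion: since $T_\rho M=\ker(T\psi)$ is involutive and the fibers of $\psi$ are connected, a subbundle $E$ with $T_\rho M\subseteq E\subseteq TM$ satisfies $E=(T\psi)^{-1}(E_N)$ for a unique subbundle $E_N\subseteq TN$ (equivalently $E/T_\rho M\cong\psi^*E_N$) if and only if $[\Ga(T_\rho M),\Ga(E)]\subseteq\Ga(E)$. For the forward implication, if $\xi\in\Ga(T_\rho M)$ has local flow $\phi_t$, then $\psi\o\phi_t=\psi$, so for $\eta\in\Ga(E)$ the curve $t\mapsto T\psi(\eta(\phi_t(y)))$ stays in the fixed subspace $(E_N)_{\psi(y)}$ and its derivative $T\psi([\xi,\eta])(y)$ lies there as well; for the converse, $\mathcal{L}_\xi\Ga(E)\subseteq\Ga(E)$ makes $E$ invariant under the flows of vertical fields, so $T\psi(E_y)$ is constant along the connected fibers and defines $E_N$. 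Applying this to $E=T^{i'}_PM$ and invoking Theorem~\ref{thm3.2.1}(1), which gives $[\xi,\eta]+\tau(\xi,\eta)\in\Ga(T^{i'}_PM)$, the bracket condition is equivalent to $\tau(\xi,\eta)\in\Ga(T^{i'}_PM)$; since $\tau$ is tensorial, this is exactly $\tau(T_\rho M,T^{i'}_PM)\subseteq T^{i'}_PM$, which proves~(1).

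For the isomorphism in part~(2), I would apply~(1) at the indices $i'$ and $i'+1$ to obtain subbundles $T^{i'+1}N\subseteq T^{i'}N\subseteq TN$ (the descent at $i'+1$ is vacuous when $i'=-1$, as $T^0_PM=T_\rho M$, and otherwise is part of the hypothesis). Since $T\psi$ induces $T^{i'}_PM/T_\rho M\cong\psi^*T^{i'}N$ and $T^{i'+1}_PM/T_\rho M\cong\psi^*T^{i'+1}N$, quotienting the first by the second yields $\Cal V_{i'}M|_U=T^{i'}_PM/T^{i'+1}_PM\cong\psi^*(T^{i'}N/T^{i'+1}N)$.

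The substance is the description of parallel sections, which I expect to be the main obstacle. Under the stronger hypothesis $\tau(T_\rho M,T^{i'}_PM)\subseteq T^{i'+1}_PM$, the torsion term in Theorem~\ref{thm3.2.1}(2) projects to zero in $\Cal V_{i'}M=T^{i'}_PM/T^{i'+1}_PM$, so for a representative $\eta\in\Ga(T^{i'}_PM)$ of $\si$ the connection reduces to the Bott-type formula $\nabla^{\rho,\Cal V_{i'}}_\xi\si=\pr([\xi,\eta])$; hence $\si$ is parallel if and only if $[\xi,\eta]\in\Ga(T^{i'+1}_PM)$ for all $\xi\in\Ga(T_\rho M)$. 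To match this with pullbacks I would reuse the flow computation, by which the section $\bar\eta:=T\psi\o\eta$ of $\psi^*T^{i'}N$ satisfies $\tfrac{d}{dt}\big|_0\bar\eta(\phi_t(y))=T\psi([\xi,\eta])(y)$ inside the fixed space $(T^{i'}N)_{\psi(y)}$. If $\si$ is a pullback, choose a $\psi$-projectable representative $\eta$ (lift a local section of $T^{i'}N$ covering the given section on $N$ through the submersion $T\psi$); then $[\xi,\eta]\in\Ga(T_\rho M)\subseteq\Ga(T^{i'+1}_PM)$, so $\si$ is parallel. Conversely, if $\si$ is parallel then $T\psi([\xi,\eta])\in\psi^*T^{i'+1}N$, so the class of $\bar\eta$ in $\psi^*(T^{i'}N/T^{i'+1}N)$ has vanishing derivative along every vertical flow; connectedness of the fibers then forces this class to be constant along fibers, i.e.\ $\si$ is the pullback of a section of $T^{i'}N/T^{i'+1}N$. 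The delicate point throughout is that the stronger torsion bound is exactly what removes the torsion contribution and turns $\nabla^{\rho,\Cal V_{i'}}$ into a Bott-type operator, after which the infinitesimal parallel condition integrates, via the flow identity and connectedness of the fibers, to genuine constancy along the leaves.
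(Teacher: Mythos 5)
Your argument is correct, and it reaches the same two reductions as the paper -- via Theorem~\ref{thm3.2.1} the torsion conditions become the bracket conditions $[\xi,\eta]\in\Ga\bigl(T^{i'}_PM\bigr)$ resp.\ $[\xi,\eta]\in\Ga\bigl(T^{i'+1}_PM\bigr)$ -- but you settle the resulting differential-geometric questions by a genuinely different route. Where the paper adapts the proof of the Frobenius theorem from Michor's book, building in a Frobenius chart an adapted frame $\tilde\eta_{k+j}=\partial_{k+j}+\sum_i c_{ij}\partial_i$ and showing by hand that the coefficient functions are killed by the vertical coordinate derivatives and hence constant along fibers, you invoke the flow picture: $\psi\o\phi_t=\psi$ for the flow of a vertical field gives $T\psi([\xi,\eta])(y)=\tfrac{d}{dt}\big|_0 T\psi(\eta(\phi_t(y)))$, which yields necessity of the bracket condition for any section $\eta$ at once (slightly more directly than the paper's frame-plus-Leibniz argument), while sufficiency rests on the standard lemma that $\mathcal L_\xi\Ga(E)\subseteq\Ga(E)$ forces $T\phi_t(E)=E$, after which connectedness of the fibers propagates $T\psi(E_y)$ along each fiber. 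The same flow identity, applied to the class of $T\psi\o\eta$ in the quotient $\psi^*\bigl(T^{i'}N/T^{i'+1}N\bigr)$, replaces the paper's second round of adapted frames in the characterization of parallel sections. Your approach is more conceptual and avoids coordinates; its only cost is that the flow-invariance lemma and the smoothness of the descended subbundle $E_N$ (e.g., via a local slice of $\psi$) are outsourced to standard facts rather than proved inline, and your reading that the descent condition at index $i'+1$ is implicitly part of the hypotheses of part~(2) is the correct one (the paper makes the same tacit assumption, and it is explicit in Corollary~\ref{cor3.3}).
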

\begin{proof}
 (1) We first observe that by Theorem~\ref{thm3.2.1}, the condition on $\tau$ in (1)
 is equivalent to the fact that $[\xi,\eta]\in\Ga\bigl(T^{i'}_PM\bigr)$ for any
 $\xi\in\Ga\bigl(T_\rho M\bigr)$ and $\eta\in\Ga\bigl(T^{i'}_PM\bigr)$. To prove necessity of this
 condition, we assume that $T^{i'}N\subset TN$ is a subbundle with the required
 property. Then for a~local section $\underline{\eta}$ of $T^{i'}N$ we can first
 pull back to a local section of $T^{i'}_PM/T_\rho M$ and then choose a
 representative section $\eta\in\Ga\bigl(T^{i'}_PM\bigr)$ for this. This means that
 $T_y\psi(\eta(y))=\underline{\eta}(\psi(y))$ on the domain of definition of $\eta$,
 so the vector fields $\eta$ and $\underline{\eta}$ are $\psi$-related. Moreover,
 $\xi\in\Ga\bigl(T_\rho M\bigr)$ is of course $\psi$-related to the zero vector field on
 $N$. This shows that $[\xi,\eta]$ has to be $\psi$-related to the zero vector
 field, too, and hence $[\xi,\eta]\in\Ga\bigl(T_\rho M\bigr)$. Starting with a local frame of
 $T^{i'}N$ and adding a local frame of $T_\rho M$, we conclude that there are local
 frames for $T^{i'}_PM$ consisting of vector fields $\eta$ such that
 $[\xi,\eta]\in\Ga\bigl(T_\rho M\bigr)$ for any $\xi\in\Ga\bigl(T_\rho M\bigr)$, which clearly implies
 that the bracket with $\xi$ sends any section of $T^{i'}_PM$ to a section of this
 subbundle.

To prove sufficiency of the condition, put $n=\dim(M)$, $\ell=\text{rank}\bigl(T^{i'}_PM\bigr)$
and $k=\text{rank}\bigl(T_\rho M\bigr)$. We adapt an argument from the proof of the Frobenius
theorem in \cite[Section 3.18]{Michor:topics}. Recall that given a point $x\in U$ we
can find a Frobenius chart $(V,v)$ (for the involutive distribution~$T_\rho M$)
around $x$ such that $V\subset U$. This means that $v(V)\subset\Bbb R^n$ can be
written as $W_1\x W_2$ for connected open subsets in $\Bbb R^k$ and $\Bbb R^{n-k}$,
such that for each $a\in W_2$, the pre-image $v^{-1}(W_1\x\{a\})$ is an integral
submanifold for the distribution $T_\rho M$. Since $\psi$ is a submersion,
$\psi(V)\subset N$ is open and by construction the $\Bbb R^{n-k}$-component of $v$
descends to a map $\psi(V)\to W_2$. This is smooth by the universal property of
surjective submersions and a local diffeomorphism by construction. Shrinking $V$ if
necessary, this becomes a diffeomorphism and hence can be used as a chart map on~$N$.
We implement this by viewing the local coordinates $v^i$ for $i=k+1,\dots, n$ to
be defined both on $V$ and on $\psi(V)$, and then in these coordinates $\psi$ is
given by $\bigl(v^1,\dots,v^n\bigr)\mapsto \bigl(v^{k+1},\dots,v^n\bigr)$.

Let us denote by $\partial_i$ the coordinate vector field
\smash{$\frac{\partial}{\partial v^i}$} for each $i$. Then by construction
$\partial_1,\dots,\partial_k$ form a local frame for $T_\rho M$ and, shrinking $V$ if
necessary, we can extend them by $\eta_{k+1},\dots,\eta_{\ell}\in\Ga\bigl(T^{i'}_PM|_V\bigr)$
to a local frame for $T^{i'}_PM|_V$. Expanding these sections in terms of the
$\partial_i$, we can leave out all summands with $i\leq k$, so we can assume that
there are smooth functions $a_{ij}\colon V\to\Bbb R$ for $1\leq i\leq n-k$ and
$1\leq j\leq\ell-k$ such that
\smash{$\eta_{j+k}=\sum_{i=1}^{n-k}a_{ij}\partial_{k+i}$}. Viewing $(a_{ij})$ as a matrix of
size $(n-k)\x (\ell-k)$, it follows from linear independence of the fields $\eta_j$
that the matrix $(a_{ij}(y))$ has rank $\ell-k$ for each $y\in V$ and hence has
$\ell-k$ linearly independent rows. Specializing to $y=x$, we can assume (permuting
coordinates in $W_2$ if necessary) that the first $\ell-k$ rows of~$(a_{ij}(x))$ are
linearly independent and hence the top $(\ell-k)\x(\ell-k)$ submatrix is
invertible. Again shrinking $V$ if necessary, we can assume that the corresponding
submatrix in $(a_{ij}(y))$ is invertible for any $y\in V$. Hence, we find smooth
functions $b_{ij}\colon V\to\Bbb R$ for $1\leq i,j\leq \ell-k$ such that
$\sum_r a_{ir}(y)b_{rj}(y)=\delta_{ij}$ for any $i$, $j$ and any $y\in V$.

Now we define \smash{$\tilde\eta_{k+j}:=\sum_{i=1}^{\ell-k}b_{ij}\eta_{k+i}$} for
$i=1,\dots,\ell-k$. By construction, these are smooth sections of $T^{i'}_PM$ which in
each point span the same subspace as the $\eta$'s and hence also extend~$\partial_1,\dots,\partial_k$
to a local frame of $T^{i'}_PM$ on $V$. Plugging in the
expansions of the $\eta$'s in terms of the coordinate vector fields, one immediately
concludes that $\tilde\eta_{k+j}=\partial_{k+j}+\sum_{i=\ell+1}^nc_{ij}\partial_i$
for some smooth functions $c_{ij}\colon V\to\Bbb R$. Assuming that
$[\xi,\eta]\in\Ga\bigl(T^{i'}_PM\bigr)$ for any $\xi\in\Ga\bigl(T_\rho M\bigr)$ and
\smash{$\eta\in\Ga\bigl(T^{i'}_PM\bigr)$} we conclude that for each $i=1,\dots,k$,
\smash{$[\partial_i,\tilde\eta_{k+j}]$} can be expanded in as a smooth linear combination of
$\partial_1,\dots,\partial_k,\tilde\eta_{k+1},\dots,\tilde\eta_{\ell}$. But this
bracket is given by $\sum_{r\geq\ell+1}(\partial_i\cdot c_{rj})\partial_r$ and in
view of the form of the $\tilde\eta$'s this is only possible if
$[\partial_i,\tilde\eta_{k+j}]=0$ for any $i=1,\dots k$ and $j=1,\dots,\ell-k$. But
this implies that all the functions $c_{rj}$ satisfy $\partial_i\cdot c_{rj}=0$ for
all $i$, whence they are independent of the coordinates $v^1,\dots,v^k$ and hence
constant along the fibers of $\psi$.

Given a point $x_0\in U$ the image of the fiber of $T^{i'}_PM$ over $x_0$ is a linear
subspace of $T_{\psi(x_0)}N$ of dimension $\ell-k$ and we consider the set
$A\subset \psi^{-1}(\psi(x_0))$ consisting of all points $x$ which lead to the same
subspace. For $x\in A$, the above argument shows that
$V\cap\psi^{-1}(\psi(x))\subset A$, so $A$ is open. Since $A$ is evidently closed,
connectedness of the fibers of $\psi$ implies that $A=\psi^{-1}(\psi(x_0))$ which
completes the proof of (1).

(2) The first statement is obvious by part (1). Under the additional assumption on
$\tau$, part~(2) of Theorem~\ref{thm3.2.1} shows that for a representative
$\eta\in\Ga\bigl(T^{i'}_PM\bigr)$ for $\si\in\Ga(\Cal V_{i'}M)$ one obtains \smash{$\nabla^{\rho,\Cal
 V_{i'}}_\xi\si$} as the projection of $[\xi,\eta]$. Hence, what we have to prove here
is that $\nabla^{\rho,\Cal V_{i'}}\si=0$ if and only if for one or equivalently any
representative $\eta$ for $\nabla$ we get $[\xi,\eta]\in\Ga\bigl(T^{i'+1}_PM\bigr)$ for any
$\xi\in\Ga\bigl(T_\rho M\bigr)$. Now we can run the argument with a Frobenius chart as in the
proof of (1) in two steps. First, we extend the local frame
$(\partial_i)_{i=1,\dots,k}$ for $T_\rho M$ by vector fields
$\eta_{k+1},\dots,\eta_{k+\ell'}$ to a local frame of~$T^{i'+1}_P M$ in such a way
that $\eta_{k+j}=\partial_{k+j}+\sum_{r>k+\ell'}a_r\partial_r$ for some smooth
functions $a_r$. Then we can further extend by
$\tilde\eta_{k+\ell'+1},\dots,\tilde\eta_{k+\ell}$ to a local frame for
$T^{i'}_PM$. As before, we can obviously assume that the $\tilde\eta$'s are smooth
linear combinations of the coordinate vector fields $\partial_r$ with $r>k+\ell'$
only. As in part (1), we can then modify these to fields
$\eta_{k+\ell'+1},\dots,\eta_{k+\ell}$ such that
$\eta_{k+\ell'+j}=\partial_{k+\ell'+j}+\sum_{r>k+\ell}c_{rj}\partial_r$ for smooth
functions $c_{rj}$.

In particular, as in part (1) the vector fields
$\eta_{k+\ell'+1},\dots,\eta_{k+\ell}$ project to a local frame for~$\Cal V_{i'}M$
that descends to a local frame for $T^{i'}N/T^{i'+1}N$. Now in the domain of our
Frobenius chart, take a section of $\Cal V_{i'}M$ and a representative in
$\Ga\bigl(T^{i'}_PM\bigr)$. Expanding this in terms of our local frame, we can drop all
summands except the last $\ell-\ell'$ ones without changing the projection to
$\Ga(\Cal V_{i'}M)$. Hence, we find a representative of the form
\smash{$\eta=\sum_{j=1}^{\ell-\ell'}c_j\eta_{k+\ell'+j}$} and
\smash{$[\partial_i,\eta]=\sum_{j=1}^{\ell-\ell'}(\partial_i\cdot c_j)\eta_{k+\ell+j}$}. The
form of $\eta_{k+\ell+j}$ readily implies that this is a section of $T^{i'+1}M$ if
and only if it is zero and hence to $\partial_i\cdot c_j=0$ for any $i$ and $j$. But
this is equivalent to each $c_j$ being independent of the first $k$
coordinates. Hence, $\eta$ descends to a local section of $T^{i'}N/T^{i'+1}N$ whose
pullback visibly coincides with the original section of $\Cal V_{i'}M$. Conversely,
in our frame, any such pullback has coordinate functions that descend to $N$.
\end{proof}

\subsection{The distinguished class of relative tractor bundles}\label{3.3}
In the setting of Section~\ref{2.3}, let us assume that we have given a parabolic geometry
${(p\colon \Cal G\!\to\! M,\om)}$ whose torsion $\tau$ satisfies $\tau\bigl(T_\rho M,T^{i'}_PM\bigr)\subset
T^{i'}_PM$ for all $i'\leq 0$. Then $T_\rho M$ is involutive and by~Theorem~\ref{thm3.2.2} for any local leaf space $\psi\colon U\to N$ for the induced foliation with
connected fibers, we get a canonical filtration of the tangent bundle by the smooth
subbundles $T^{i'}N$. As usual, one can form the associated graded of this
filtration, which has the form
\begin{gather*}
 \gr(TN)=\bigoplus_{i'}\gr_{i'}(TN) \qquad\text{with}\quad \gr_{i'}(TN)=T^{i'}N/T^{i'+1}N
\quad\text{and}\quad T^0N=N\x \{0\}.
\end{gather*}

Performing natural constructions with vector bundles, one always obtains induced
filtrations of the resulting bundles, and forming the associated graded is compatible
with the constructions. For example, the natural filtration of the cotangent bundle
$T^*N$ has positive indices with the~$j$th filtration component being the annihilator
of $T^{-j+1}N$ so $TN=T^1N\supset T^2N\supset\cdots$. For the associated graded, this
readily implies that $\gr_j(T^*N)$ is dual to $\gr_{-j}(TN)$. For filtered bundles~$\bigl(E,E^i\bigr)$
and $\bigl(F,F^j\bigr)$, the components of the filtration on the tensor product
$E\otimes F$ are defined by letting $(E\otimes F)^\ell$ be the span of the component
$E^i\otimes F^j$ with $i+j=\ell$. For the associated graded, this means that
$\gr_\ell(E\otimes F)=\bigoplus_{i+j=\ell}\gr_i(E)\otimes\gr_j(F)$. See
\cite[Section 3.1.1]{book} for more details.

This allows us to formulate the description of the special class of relative tractor
bundles we are aiming at.
\begin{Corollary}[to Theorems \ref{thm3.2.1} and \ref{thm3.2.2}] \label{cor3.3}
In the setting of Section~$\ref{2.3}$, consider a parabolic geometry $(p\colon \Cal G\to M,\om)$
whose torsion $\tau$ satisfies $\tau\bigl(T_\rho M,T^{i'}_PM\bigr)\subset T^{i'}_PM$ for all
$i'\leq 0$. Let~${\psi\colon U\to N}$ be a local leaf space for the involutive distribution
$T_\rho M=T^0_PM$ with connected fibers.
\begin{itemize}\itemsep=0pt
\item[$(1)$] For any $i'$ we get $\Cal V_{i'}M|_U=\psi^*(\gr_{i'}(TN))$. Applying any
tensorial construction to the bundles $\Cal V_{i'}M$, one obtains a relative tractor
bundle whose restriction to $U$ can be naturally identified with the pullback of the
same construction applied to the bundles $\gr_{i'}(TN)$.
\item[$(2)$] Suppose that the geometry even satisfies $\tau\bigl(T_\rho M,T^{i'}_PM\bigr)\subset
T^{i'+1}_PM$ for all $i'<0$. Then for any relative
tractor bundle constructed as in $(1)$ a local section over $U$ is parallel for the
relative tractor connection if and only if it is the pullback of a section of the
corresponding bundle over~$N$.
\end{itemize}
\end{Corollary}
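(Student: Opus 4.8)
The plan is to deduce both parts directly from Theorem~\ref{thm3.2.2}, together with the standard behaviour of relative tractor bundles and of their connections under tensorial constructions. For part (1), I first observe that the hypothesis $\tau\bigl(T_\rho M,T^{i'}_PM\bigr)\subset T^{i'}_PM$ for all $i'\leq 0$ is exactly the condition of part (1) of Theorem~\ref{thm3.2.2} for each individual index $i'$. Hence, for every $i'$, there is a smooth subbundle $T^{i'}N\subset TN$ corresponding to $T^{i'}_PM/T_\rho M$, and these assemble into the filtration of $TN$ whose associated graded appears in the statement. The identification $\Cal V_{i'}M|_U=\psi^*(\gr_{i'}(TN))$ is then precisely the first assertion of part (2) of Theorem~\ref{thm3.2.2}, since $\gr_{i'}(TN)=T^{i'}N/T^{i'+1}N$ by definition.

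For the second assertion of part (1), I would argue that any tensorial construction applied to relative tractor bundles again yields a relative tractor bundle: the $\Cal V_{i'}M$ are induced by representations $\Bbb V_{i'}$ of $P$ on which $P_+$ acts trivially, and any tensorial construction produces a representation of $P$ on which $P_+$ still acts trivially. Since pullback along $\psi$ commutes with all tensorial constructions of vector bundles, applying such a construction $\Cal F$ to the isomorphisms $\Cal V_{i'}M|_U\cong\psi^*(\gr_{i'}(TN))$ yields $\Cal F(\Cal V_{i'}M)|_U\cong\psi^*\bigl(\Cal F(\gr_{i'}(TN))\bigr)$, as required.

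For part (2), the key point is to identify the relevant relative tractor connection with a canonical flat partial connection on a pullback bundle. Recall that for a vector bundle $E\to N$, the pullback $\psi^*E$ carries a canonical partial connection along $\ker T\psi=T_\rho M$ for which the pullbacks $\psi^*s$ of sections of $E$ are exactly the parallel sections; in a local frame obtained by pulling back a frame of $E$ it simply differentiates the coefficient functions. Now, under the stronger hypothesis $\tau\bigl(T_\rho M,T^{i'}_PM\bigr)\subset T^{i'+1}_PM$, part (2) of Theorem~\ref{thm3.2.2} tells us that the parallel sections of $\nabla^{\rho,\Cal V_{i'}}$ are precisely the pullbacks of sections of $\gr_{i'}(TN)$. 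Pulling back a local frame of $\gr_{i'}(TN)$ thus produces a local frame of $\Cal V_{i'}M|_U$ consisting of sections parallel for $\nabla^{\rho,\Cal V_{i'}}$; expressing an arbitrary section in this frame shows that $\nabla^{\rho,\Cal V_{i'}}$ coincides with the canonical flat pullback connection on $\psi^*(\gr_{i'}(TN))$.

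It then remains to transport this identification through the tensorial construction. The relative tractor connection on $\Cal F(\Cal V_{i'}M)$ is the connection induced by the $\nabla^{\rho,\Cal V_{i'}}$ via the Leibniz rule, since both the relative fundamental derivative $D^\rho$ and the infinitesimal action $\bullet$ out of which $\nabla^{\rho,\Cal V_{i'}}=D^\rho+\bullet$ is built act as derivations on tensorial constructions. On the other hand, the canonical flat pullback connection is compatible with all tensorial constructions, because tensor products of pulled-back parallel frames are again pulled-back parallel frames. Combining these facts with the identification of the previous paragraph, the relative tractor connection on $\Cal F(\Cal V_{i'}M)|_U$ agrees with the canonical flat pullback connection on $\psi^*\bigl(\Cal F(\gr_{i'}(TN))\bigr)$, so its parallel sections are exactly the pullbacks of sections of $\Cal F(\gr_{i'}(TN))$ over $N$. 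I expect the main obstacle to be the careful verification that the relative tractor connection on a tensorial construction is genuinely the induced connection, together with the frame argument identifying each $\nabla^{\rho,\Cal V_{i'}}$ with the flat pullback connection; once these are in place, the compatibility of pullback connections with tensorial constructions is routine.
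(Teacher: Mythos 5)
Your proposal is correct and follows essentially the same route as the paper: part (1) is read off from Theorem~\ref{thm3.2.2} together with the compatibility of tensorial constructions with associated bundles, associated gradeds and pullbacks, and part (2) rests on the fact that the relative tractor connection on a tensorial construction is the induced connection (via naturality of the fundamental derivative and the infinitesimal action) combined with Theorem~\ref{thm3.2.2}. Your explicit identification of each $\nabla^{\rho,\Cal V_{i'}}$ with the canonical flat pullback partial connection via a pulled-back parallel frame is a welcome elaboration of a step the paper leaves implicit, but it is not a different argument.
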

\begin{proof}
 (1) The first part immediately follows from Theorems~\ref{thm3.2.1} and~\ref{thm3.2.2}. Applying a tensorial construction vector bundles associated to a
 principal bundle, one always obtains the bundle associated to the representation
 obtained from the corresponding construction on the inducing representations. Hence, any tensorial construction with the bundles $\Cal V_{i'}M$ leads to a relative
 tractor bundle. The last part then follows from the compatibility of tensorial
 constructions with the passage from filtered vector bundles to associated graded
 bundles as discussed above and with pullbacks.

(2) Recall the construction of the relative tractor connection in terms of the
 fundamental derivative (or its relative analog) and the infinitesimal
 representation as described in Section~\ref{3.1}. Naturality of the fundamental
 derivative (see \cite[Section 1.5.8]{book}) then implies that applying
 a~tensorial construction to the bundles $\Cal V_{i'}M$, the relative tractor
 connection on the resulting relative tractor bundle coincides with the connection
 induced by the relative tractor connection on the bundles $\Cal V_{i'}M$. Knowing
 this, the general claim follows from Theorem~\ref{thm3.2.2}.
\end{proof}

\subsection{Example: Legendrean contact structures}\label{3.4}
Legendrean contact structures (also called Lagrangean contact structures following
the article~\cite{Takeuchi} in which they were first studied) are an example of
parabolic contact structures, so they can be viewed as a refinement of contact
structures. Explicitly, a Legendrean contact structure on a smooth manifold of odd
dimension $2n+1$ is given by a contact structure $H\subset TM$ together with a
decomposition $H=E\oplus F$ of $H$ as a direct sum of two subbundles of rank $n$,
which are Legendrean in the sense that the Lie bracket of two sections of one of the
subbundles always is a section of $H$. Otherwise put, for any $x\in M$ the fibers
$E_x,F_x\subset H_x\subset T_xM$ are maximal isotropic subspaces with respect to the
non-degenerate bilinear form $\Cal L_x\colon H_x\x H_x\to T_xM/H_x$ induced by the Lie
bracket of vector fields. The interest in these structures mainly comes from their
relation to the geometric theory of differential equations, see, e.g., \cite{MFMZ} and
to CR geometry, see, e.g., \cite{Doubrov-Merker-The}. Indeed, Legendrean contact
structures and partially integrable almost CR structures are two real forms of the
same complex geometric structure.

The relation to parabolic geometries comes from the homogeneous model of such
geometries, which is the partial flag manifold $F_{1,n+1}\bigl(\Bbb R^{n+2}\bigr)$ of lines
contained in hyperplanes in $\mathbb R^{n+2}$. This is a homogeneous space of the
group $G:={\rm PGL}(n+2,\Bbb R)$ and the stabilizer of the standard flag
$\Bbb R\subset\Bbb R^{n+1}\subset\Bbb R^{n+2}$ descends to a parabolic subgroup
$Q\subset G$. Indeed, $Q=P\cap\tilde P$ for two maximal parabolic subgroups of $G$,
the stabilizers of $\Bbb R\subset\Bbb R^{n+2}$ and $\Bbb R^{n+1}\subset\Bbb
R^{n+2}$. Correspondingly, there is a canonical homogeneous projection
$G/Q\to G/P=\Bbb RP^{n+1}$ and it is easy to see that this identifies $G/Q$ with the
projectivized cotangent bundle $\Cal P\bigl(T^*\Bbb RP^{n+1}\bigr)$. This gives rise to a
canonical contact structure on $G/Q$ for which the vertical subbundle is well known
to be Legendrean. The second Legendrean subbundle making $G/Q$ into a Legendrean
contact manifold is the vertical bundle of the canonical projection
$G/Q\to G/\tilde P\cong\Bbb RP^{(n+1)*}$.

On the level of Lie algebras, we also have
$\frak q=\frak p\cap\tilde{\frak p}\subset\frak g=\frak{sl}(n+2,\Bbb R)$, where
$\frak p$ and $\tilde{\frak p}$ are the stabilizers of $\Bbb R\subset\Bbb R^{n+2}$
and $\Bbb R^{n+1}\subset\Bbb R^{n+2}$, respectively. The parabolics $\frak p$ and
$\tilde{\frak p}$ are exchanged by an outer automorphism of $\frak g$, so we only
discuss $\frak q\subset\frak p\subset\frak g$. Decomposing matrices of size~${n+2}$
into blocks of size $1$, $n$, and $1$, the bigrading induced by this pair has the
form
\begin{equation}\label{bigrading}
\begin{pmatrix} \frak g_{(0,0)} & \frak g_{(1,0)} & \frak g_{(1,1)} \\
 \frak g_{(-1,0)} & \frak g_{(0,0)} & \frak g_{(0,1)} \\
 \frak g_{(-1,-1)} & \frak g_{(0,-1)} & \frak g_{(0,0)}
\end{pmatrix}.
\end{equation}
In particular, we just get one representation $\Bbb V_{i'}$ in this case, namely $\Bbb
V_{-1}=\frak g/\frak p$ and the $Q$-invariant filtration of this representation
consists of the single invariant subspace $(\frak g_{(-1,0)}\oplus\frak p)/\frak
p$. The corresponding bundles are easy to interpret, too.

General results on parabolic geometries imply that for any Legendrean contact
structure $(M,H=E\oplus F)$ there is a normal Cartan geometry $(p\colon \Cal G\to M,\om)$
of type $(G,Q)$ which is unique up to isomorphism, see \cite[Section 4.2.3]{book}.
It is a general fact about Cartan geometries that $\Cal G\x_Q(\frak
g/\frak q)\cong TM$ and the characterizing property of the geometry (apart from
normality) is that the $Q$-invariant subspaces $\bigl(\frak g_{(-1,0)}\oplus \frak
q\bigr)/\frak q\subset\frak g/\frak q$ and $\bigl(\frak g_{(0,-1)}\oplus \frak q\bigr)/\frak
q\subset\frak g/\frak q$ correspond to the subbundles $E$ and $F$,
respectively. Since $\frak g_{(0,-1)}\oplus \frak q=\frak p$ we conclude that
$T_{\rho}M=F$ in this case, and that our basic tractor bundle $\Cal V_{-1}M$ is
simply $TM/F$. The natural filtration of this bundle is given by the single invariant
subspace $H/F\subset TM/F$.

Now we can collect our results in the case of Legendrean contact structures. We
restrict to the case $n>1$, i.e., $\dim(M)\geq 5$. The three-dimensional case can be
easily dealt with directly. It is not very interesting from the point of view of
relative BGG theory, however, since relative BGG sequences consist only of a single
operator in this dimension.

In the current situation, relative tractor bundles correspond to representations of
the general linear group ${\rm GL}(n+1,\Bbb R)$, so they correspond to natural vector
bundles on smooth manifolds of dimension $n+1$. The relative tractor bundle
$\Cal VM=TM/F$ corresponds to the standard representation of ${\rm GL}(n+1,\Bbb R)$ on
$\Bbb R^{n+1}$, so via tensorial constructions as in Section~\ref{3.3} we obtain all
analogs of tensor bundles. It is clear that the latter are by far the most important
natural vector bundles (in fact, we are not aware of interesting applications of
other types of natural bundles). Thus, it should be clear that we obtain (up to
isomorphism) all the important relative tractor bundles on Legendrean contact
structures via the constructions of Section~\ref{3.3}, but it is difficult to make a
precise statement in this direction. Instead of trying to do this, we just prove a
result on the resulting representations in the following theorem, which shows the
richness of the class.

\begin{Theorem}\label{thm3.4}
 Let $(M,H=E\oplus F)$ be a Legendrean contact structure of dimension ${2n+1\geq 5}$
 such that the distribution $F\subset TM$ is involutive and consider the relative
 tractor bundle $\Cal VM:=TM/F$. Applying tensorial constructions to $\Cal VM$ as in
 Section~$\ref{3.3}$, we arrive at a class of relative tractor bundles that correspond to
 representations of $P/P_+\cong {\rm GL}(n+1,\Bbb R)$ which contains all one-dimensional
 representations and whose restrictions to
 $\frak{sl}(n+1,\Bbb R)\subset\frak p/\frak p_+$ exhaust all finite-dimensional
 representations of this Lie algebra.

 For any of these relative tractor bundles and any local leaf space $\psi\colon U\to N$
 for the distribution~$F$ with connected fibers, the induced relative BGG sequence
 defines a fine resolution of the sheaf of pullbacks of sections of a tensor bundle
 on $N$. This tensor bundle is obtained by applying the tensorial constructions from
 above to the tangent bundle $TN$.
\end{Theorem}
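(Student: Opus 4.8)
The plan is to prove the three assertions of the theorem in turn: the representation‑theoretic classification of the bundles, the deduction of the curvature hypotheses of Corollary~\ref{cor3.3} and of \cite[Theorem 4.11]{Rel-BGG2} from involutivity of $F$, and finally the assembly of the resolution statement.

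First I would settle the statement about representations. Using $\Cal VM=TM/F=\frak g/\frak p=\Bbb V_{-1}$, the bundle $\Cal VM$ corresponds to the standard representation $\Bbb R^{n+1}$ of $P/P_+\cong{\rm GL}(n+1,\Bbb R)$, and by Corollary~\ref{cor3.3} any tensorial construction on $\Cal VM$ corresponds to the same construction on $\Bbb R^{n+1}$. For the one‑dimensional representations, the top exterior power $\La^{n+1}\Cal VM$ realizes the determinant character; combining its integral powers with the real powers of the line bundle $\La^{n+1}\Cal VM\otimes\La^{n+1}\Cal VM$ (whose fibre transforms by $(\det)^2$) yields every character $|\det|^{r}(\operatorname{sgn}\det)^{\ep}$, hence all one‑dimensional representations. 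For the claim about $\frak{sl}(n+1,\Bbb R)$ I would invoke highest‑weight theory: the fundamental representations are $\La^{i}\Bbb R^{n+1}$ for $1\le i\le n$, and every finite‑dimensional irreducible representation with dominant integral highest weight $\la=\sum a_i\om_i$ occurs as the Cartan component of $\bigotimes_i(\La^{i}\Bbb R^{n+1})^{\otimes a_i}$; since the tensorial constructions include the passage to irreducible summands, all of these are exhausted after restriction to $\frak{sl}(n+1,\Bbb R)$.

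The hard part is to deduce the curvature hypotheses from mere involutivity of $F$, and this is the technical heart. Here I would use that for $n>1$ the harmonic curvature of a Legendrean contact structure has exactly two components, $\tau_E\in\Ga(\La^2E^*\otimes F)$ and $\tau_F\in\Ga(\La^2F^*\otimes E)$, both of homogeneity one, and that involutivity of $F$ is equivalent to the vanishing of $\tau_F$, which measures the $E$‑component of $[F,F]$. The key device is the grading element $Z\in\frak g_{(0,0)}$ of the second grading, acting on $\frak g_{(i',i'')}$ by the scalar $i''$; thus $E^{*}$ carries $Z$‑weight $0$ while $F^{*}$ and $L^{*}$ carry $Z$‑weight $+1$, and $\tau_E$ has the minimal $Z$‑weight $-1$. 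Since $\tau_F=0$, the whole normal curvature is generated from $\tau_E$ by covariant differentiation and brackets; as each covariant derivative adds one cotangent leg and so raises the $Z$‑weight by at most one, an induction along the curvature recursion gives that the homogeneity‑$m$ component $\ka_m$ has $Z$‑weight at most $m-2$. Writing a component of $\ka(F,F)$ valued in $\frak g_{(i',i'')}$ one finds homogeneity $m=i'+i''+2$ and tensor $Z$‑weight $i''+2$, so the bound forces $i'\ge 2$; and a component of $\tau(F,E)$ or $\tau(F,L)$ similarly forces $i'\ge 1$. Both are impossible, since $i'\le 1$ on all of $\frak g$ and $i'\le 0$ on torsion values. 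I therefore expect to conclude $\tau(F,TM)=0$ and $\ka(F,F)=0$ whenever $F$ is involutive. The main obstacle is making the weight bound on $\ka_m$ fully precise, i.e.\ checking that the recursion producing the non‑harmonic parts of a regular normal curvature from its harmonic part does not raise the $Z$‑weight faster than the covariant derivatives do.

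Finally, with $\tau(F,T^{i'}_PM)\subset T^{i'+1}_PM$ in force (indeed $\tau(F,-)=0$) and the filtration of $TN$ trivial ($T^{-1}N=TN$, $T^{0}N=0$), Corollary~\ref{cor3.3}(1) identifies $\Cal VM|_U$ with $\ps^{*}TN$ and every tensorial construction on $\Cal VM$ with the pullback of the same construction on $TN$, while Corollary~\ref{cor3.3}(2) identifies the sections parallel for $\nabla^{\rho,\Cal V}$ with the pullbacks of sections over $N$. Since $\ka(F,F)=0$, \cite[Theorem 4.11]{Rel-BGG2} shows that the relative BGG sequence attached to any of these relative tractor bundles is a fine resolution of the sheaf of parallel sections; combining the two identifications gives precisely the fine resolution of the sheaf of pullbacks of the corresponding tensor bundle on $N$, completing the proof.
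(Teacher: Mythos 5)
Your representation-theoretic argument and your final assembly of the resolution statement are fine and essentially match the paper (the paper uses Young symmetrizers where you use Cartan components of tensor products of fundamental representations, and it realizes the one-dimensional representations the same way via real powers of the squared determinant bundle). The gap is in the step you yourself flag as the ``technical heart'': deducing $\tau(T_\rho M,TM)=0$ and $\ka(T_\rho M,T_\rho M)=0$ from involutivity of $F$. Your claim that ``the whole normal curvature is generated from $\tau_E$'' is false: when $F$ is involutive the harmonic curvature still has \emph{two} surviving components, namely $\tau_E\in\Ga\bigl(\La^2E^*\otimes F\bigr)$ of homogeneity $1$ and the curvature component in $E^*\otimes F^*\otimes L(E,E)$ of homogeneity $2$, and the recursion reconstructing $\ka$ from its harmonic part must take both as generators. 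The second one already violates your proposed bound: it has $Z$-weight $0+1+0+0=+1$ at homogeneity $m=2$, whereas you assert $Z$-weight at most $m-2=0$. Repairing the induction to start from this component only yields $Z$-weight at most $m-1$, and running your counting with that bound on a component of $\ka(F,F)$ valued in $\frak g_{(i',i'')}$ gives $i''+2\leq i'+i''+1$, i.e., $i'\geq 1$; this only places $\ka(F,F)$ in $\frak g^{(1,*)}=\frak p_+$ and does not give $\ka(F,F)=0$, which is what part (1) of \cite[Theorem~4.11]{Rel-BGG2} requires.

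The paper does not attempt a weight-counting argument at all here: it observes that involutivity of $F$ makes the harmonic curvature vanish upon insertion of two elements of $F$ and makes the harmonic torsion vanish upon insertion of one element of $F$, and then invokes part (1) of \cite[Proposition~4.18]{Rel-BGG2}, whose proof propagates exactly these vanishing properties from the harmonic curvature to the full Cartan curvature (via normality, the Bianchi identity and an induction on homogeneity that keeps track of the relevant subbundles rather than of a single grading element). That citation, or a correct reproduction of its proof, is what is missing from your argument; as written, the key curvature step does not go through.
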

\begin{proof}
 We first need some information on the curvature of Legendrean contact structures
 that is available in the literature. For $n>1$, \cite[Proposition 4.3.2]{book}
 completely describes the harmonic curvature of Legendrean contact structures: There
 are two harmonic curvature quantities of homogeneity one, which are sections of the
 bundles $\La^2E^*\otimes F$ and $\La^2F^*\otimes E$, respectively. These turn out
 to be induced by the Lie bracket of vector fields and they are exactly the
 obstructions to involutivity of the distributions $E$ and $F$. So in particular,
 one of these components is exactly the obstruction to involutivity of $T_\rho
 M=F$. The third and last harmonic curvature quantity has homogeneity two and can be
 interpreted as a section of $E^*\otimes F^*\otimes L(E,E)$ (with some symmetries).

Hence, involutivity of $F$ implies that the harmonic curvature vanishes upon insertion
of two elements of $F$ while the components of torsion type even vanish upon
insertion of one element of~$F$. These are exactly the assumptions needed to apply
part (1) of \cite[Proposition~4.18]{Rel-BGG2}. The~proof of this proposition then
shows that the full Cartan curvature has the same properties, which in our language
means that $\tau\bigl(T_\rho M,TM\bigr)=0$, so the assumptions of Corollary~\ref{cor3.3} are
satisfied. On the other hand, \cite[Proposition~4.18]{Rel-BGG2} says that the
assumptions of part (1) of~\mbox{\cite[Theorem 4.11]{Rel-BGG2}} are satisfied. This shows
that any relative BGG sequence is a fine resolution of the sheaf of local parallel
sections for the relative tractor connection. Together with Corollary \ref{cor3.3},
this implies all claimed properties of the relative BGG sequence for $\Cal
VM=TM/F$. For bundles obtained via tensorial constructions, they then follow readily
since relative tractor connections are compatible with tensorial constructions.

So it remains to verify the claim about the representations corresponding to our
relative tractor bundles, i.e., those obtained by tensorial constructions from the
standard representation. Putting
$\frak s:=\frak{sl}(n+1,\Bbb R)\subset\frak p/\frak p_+$ semi-simplicity of $\frak s$
implies that it acts trivially on any one-dimensional representation of
$P/P_+$. Hence, also the subgroup ${\rm SL}(n+1,\Bbb R)\subset P/P_+$ acts trivially so any
such representation factorizes through the determinant representation
$\det\colon {\rm GL}(n+1,\Bbb R)\to\Bbb R\setminus\{0\}$. The determinant representation is
realized via the top exterior power of the standard representation. The square of
this line bundle corresponds to a~representation with values in $\Bbb R^+$, so we
can form powers of that line bundle with arbitrary non-zero real
exponents. These together with their tensor products with $\La^nTM$ exhaust all
possible one-dimensional representations of $P/P_+$.

On the other hand, it is well known that any irreducible representation of $\frak s$
can be realized in an iterated tensor product of copies of the standard
representation and its dual via Young symmetrizers. By construction, any Young
symmetrizer is $P/P_+$-equivariant, and hence each of these irreducible components is
$P/P_+$-invariant. Thus, we see that any irreducible representation of $\frak s$ is
realized by a tensorial construction and since forming direct sums is no problem, we
can arrive at an isomorphic copy of any finite-dimensional representation of $\frak
s$ from a tensorial construction.
\end{proof}

\begin{Remark}\label{rem3.4} \quad
\begin{enumerate}\itemsep=0pt
\item[(1)] It is clear that there are numerous examples of
Legendrean contact structures for which $F$~is involutive but the harmonic curvature
component that is a section of the bundle $E^*\otimes F^*\otimes L(E,E)$ is non-zero.
This follows, for example, from the relation of Legendrean contact structures to
differential equations as used in \cite{MFMZ}. In such cases, Theorem~\ref{thm3.4}
applies to give a~precise description of the sheaves resolved by relative BGG
sequences. However, these geometries do not fall in the class of correspondence
spaces as treated in part (2) of~\mbox{\cite[Theorem~4.11]{Rel-BGG2}}. Hence, in these
cases no explicit description of the resolved sheaves was known before.
\item[(2)] The idea to use quotients of $TM$ as basic relative tractor bundles originally
arose in the context of Legendrean contact structures during the work on the thesis
\cite{Michal-thesis} of the third author under the direction of the first
author. Legendrean contact structures play only a~minor role as an example in the
thesis, however. The thesis mainly explores the fact that the machinery of relative
BGG sequences extends beyond the case of Legendrean contact structures, namely to
contact manifolds that are endowed with either a single involutive Legendrean
distribution or with a Legendrean distribution with an arbitrary chosen complement in
the contact distribution. In any case, the quotient of $TM$ by the distinguished
Legendrean distribution plays a~pivotal role in these developments, since this can be
shown to carry a canonical partial linear connection obtained from the Bott
connection as in Section~\ref{3.2} or from a family of distinguished partial linear
connections determined by a choice of contact form, compare to \cite[Section 5.2.14]{book}
and to~\cite{Michal-article}. The latter are restrictions of Weyl
connections, so one obtains descriptions of relative tractor bundles and relative
tractor connections in terms of Weyl structures as discussed in~Remark~\ref{rem3.1}.
\end{enumerate}
\end{Remark}

\subsection{Example: Generalized path geometries}\label{3.5}
These geometries represent a kind of a dual situation to the case of Legendrean
contact structures discussed in Section~\ref{3.4} above, in the sense that locally they
have a projectivized tangent bundle as an underlying structure. They can be defined
directly by a configuration of distributions as follows. On a smooth manifold $M$ of
dimension $2n+1$, one requires a distribution $H\subset TM$ of rank $n+1$ together
with a decomposition $H=E\oplus V$ where $E$ and $V$ have rank $1$ and $n$,
respectively, such that the bundle map $H\otimes H\to TM/H$ induced by the Lie
bracket of vector fields vanishes on $V\otimes V$ and restricts to an isomorphism on
$E\otimes V$.

It then turns out that for $n\neq 2$, the distribution $V$ is involutive and locally
$M$ is diffeomorphic to an open subset in the projectivized tangent bundle of a local
leaf space for $V$. For $n=2$, we will assume that this is the case, since this is
necessary in order to obtain relative BGG resolutions. Assuming that one deals with
a global projectivized tangent bundle, $M=\Cal PTN$, the subbundle $E\subset TM$
defines a one-dimensional foliation on $M$ and projecting the leaves to~$N$, one
obtains a family of paths (unparametrized curves) on $N$ with exactly one path
through each point in each direction. This is a classical path geometry and
generalized path geometries can be considered as a local analog of that (where, for
example, paths can be defined for an open set of directions only). The importance of
these structures comes from the fact that they provide a~way to geometrize systems of
second order ODE, which is the starting point for their study~in~\cite{Fels}.\looseness=1

Similarly as in Section~\ref{3.4}, the relation to parabolic geometries comes from the
homogeneous model, which this time is the flag manifold $F_{1,2}\bigl(\Bbb R^{n+2}\bigr)$ of
lines in planes in $\Bbb R^{n+2}$. So we again put $G:={\rm PGL}\bigl(n+2,\Bbb R\bigr)$ and the
stabilizer $Q$ of $\Bbb R\subset\Bbb R^2\subset\Bbb R^{n+2}$ has the form
$P\cap\tilde P$, where $G/P\cong\Bbb RP^{n+1}$ and $G/\tilde P$ is the Grassmannian
${\rm Gr}\bigl(2,\Bbb R^{n+2}\bigr)$. The distributions $V$ and $E$ are the vertical subbundles of
the projections to $G/P$ and $G/\tilde P$, respectively, and the integral
submanifolds of $E$ project exactly to the projective lines in~$G/P$.

On the level of Lie algebras, the situation is closely parallel to Section~\ref{3.4}, in
particular the bigrading induced by $\frak q\subset\frak p\subset\frak g$ has the
same form as in \eqref{bigrading}, but now with blocks of size~$1$,~$1$, and~$n$, so
now $\frak g_{(-1,0)}$ is one-dimensional, while $\frak g_{(0,-1)}$ and
$\frak g_{(-1,-1)}$ both have dimension~$n$. In~this case, the situation is less
symmetric and we will make some remarks on the ``other direction'' $\frak
q\subset\tilde{\frak p}\subset\frak g$ below. We will also restrict to the case $n>1$
here, for $n=1$ there is no difference between generalized path geometries and
Legendrean contact structures.

The canonical Cartan geometry $(\Cal G\to M,\om)$ associated to $(M,H=E\oplus V)$
here has the property that the $Q$-invariant subspaces
$\bigl(\frak g_{(-1,0)}\oplus\frak q\bigr)/\frak q$ and
$\bigl(\frak g_{(0,-1)}\oplus\frak q\bigr)/\frak q$ of $\frak g/\frak q$ induce the subbundles
$E$ and $V$ of $TM$, respectively. As in Section~\ref{3.4}, one concludes that $T_\rho M=V$
and that the basic relative tractor bundle in this case is $\Cal VM=TM/V$ with the
natural filtration given by $H/V\subset TM/V$. Collecting our results in this case,
we obtain.\looseness=1

\begin{Theorem}\label{thm3.5}
 Let $(M,H=E\oplus V)$ be a generalized path geometry of dimension $2n+1\geq 5$,
 where~$V$ is assumed to be involutive if $n=2$ and consider the relative tractor
 bundle $\Cal VM:=TM/V$. Applying tensorial constructions to $\Cal VM$ as in Section
 $\ref{3.3}$, we arrive at a class of relative tractor bundles that correspond to
 representations of $P/P_+\cong {\rm GL}(n+1,\Bbb R)$ which contain all one-dimensional
 representations and whose restrictions to
 $\frak{sl}(n+1,\Bbb R)\subset\frak p/\frak p_+$ exhaust all finite-dimensional
 representations of this Lie algebra.

 For any of these relative tractor bundles and any local leaf space $\psi\colon U\to N$ for
 the distribution~$F$ with connected fibers, the induced relative BGG sequence
 defines a fine resolution of the sheaf of pullbacks of sections of a tensor bundle
 on $N$. This tensor bundle is obtained by applying the tensorial constructions from
 above to the tangent bundle $TN$.
\end{Theorem}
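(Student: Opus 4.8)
The plan is to mirror the proof of Theorem~\ref{thm3.4}, since generalized path geometries lead to the same intermediate quotient $P/P_+\cong\mathrm{GL}(n+1,\mathbb{R})$, the same basic relative tractor bundle $\mathcal{V}M=TM/T_\rho M$ with $T_\rho M=V$, and the same trivial tangent-bundle filtration $T^{-1}_PM=TM\supset T^0_PM=V$. As there, everything hinges on verifying the torsion hypotheses of Corollary~\ref{cor3.3}, which here reduce to the single nontrivial condition $\tau(V,TM)\subset V$: the condition $\tau(V,TM)\subset TM$ of Corollary~\ref{cor3.3}(1) is automatic, $\tau(V,V)\subset V$ is exactly involutivity of $V$, and $\tau(V,TM)\subset V$ is what part~(2) additionally requires. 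Once this holds, Corollary~\ref{cor3.3}(1) identifies $\mathcal{V}M|_U$ with $\psi^*TN$ over the $(n+1)$-dimensional leaf space $N$ and each tensorial construction with the corresponding pullback tensor bundle, while part~(2) identifies the parallel sections with pullbacks.

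First I would recall the harmonic curvature of generalized path geometries for $n\geq2$ from \cite{book}, with the equivalence to second-order ODE systems in \cite{Fels} as background, and sort its components into torsion type and curvature type. The point, parallel to the Legendrean case, is that involutivity of $V$ should make the torsion-type components vanish upon insertion of an element of $V$: the harmonic component obstructing involutivity of $V$ occurs only in the exceptional dimension $n=2$, where it is killed by the standing assumption, and is absent for $n\neq2$; the remaining torsion-type components must then be checked to vanish on a single $V$-insertion directly from their description as $\mathfrak{g}_0$-modules. I would then invoke \cite[Proposition~4.18]{Rel-BGG2} to propagate this from the harmonic to the full Cartan curvature, obtaining $\tau(V,TM)\subset V$ and hence both parts of Corollary~\ref{cor3.3}. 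The same proposition together with part~(1) of \cite[Theorem~4.11]{Rel-BGG2} shows that every relative BGG sequence is a fine resolution of the sheaf of sections parallel for the relative tractor connection, which Corollary~\ref{cor3.3} then identifies with the pullbacks of sections of the corresponding tensor bundle on $N$.

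The representation-theoretic assertion is then verbatim the argument from the proof of Theorem~\ref{thm3.4}, since the group $P/P_+\cong\mathrm{GL}(n+1,\mathbb{R})$ and its semisimple part $\mathfrak{s}=\mathfrak{sl}(n+1,\mathbb{R})$ coincide. Every one-dimensional representation factors through $\det$, realized on $\Lambda^{n+1}TN$ and its arbitrary real powers, and every irreducible representation of $\mathfrak{s}$ is cut out of an iterated tensor product of copies of the standard representation $\mathbb{R}^{n+1}$ (corresponding to $TN$) and its dual by a $P/P_+$-equivariant Young symmetrizer; allowing direct sums exhausts all finite-dimensional representations of $\mathfrak{s}$.

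The main obstacle I anticipate is exactly the curvature bookkeeping of the first step: showing that involutivity of $V$ alone yields $\tau(V,TM)\subset V$, where the grading is less symmetric than for Legendrean contact structures ($\dim\mathfrak{g}_{(-1,0)}=1$ while $\dim\mathfrak{g}_{(0,-1)}=\dim\mathfrak{g}_{(-1,-1)}=n$) and where the exceptional dimension $n=2$ must be tracked separately. Everything downstream is a formal consequence of Corollary~\ref{cor3.3} and the resolution theorems of \cite{Rel-BGG2}, so the mathematical content is concentrated in matching the harmonic curvature of the structure against the torsion hypotheses of Corollary~\ref{cor3.3}.
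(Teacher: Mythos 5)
Your proposal is correct and follows essentially the same route as the paper: the paper likewise reads off the two harmonic curvature components of a generalized path geometry with involutive $V$ (a homogeneity-$2$ torsion in $E^*\otimes(TM/H)^*\otimes V$ and a homogeneity-$3$ curvature in $V^*\otimes(TM/H)^*\otimes L(V,V)$), feeds them into \cite[Proposition~4.18]{Rel-BGG2} and \cite[Theorem~4.11]{Rel-BGG2}, and then concludes via Corollary~\ref{cor3.3} and the representation-theoretic argument copied verbatim from Theorem~\ref{thm3.4}. The only cosmetic difference is that the paper ends up with the stronger conclusion $\tau\bigl(T_\rho M,TM\bigr)=0$ rather than just $\tau(V,TM)\subset V$, which of course also suffices.
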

\begin{proof}
 According to \cite[Section 4.4.3]{book}, there are only two harmonic curvature
 components for generalized path geometries with involutive $V$. One of these is a
 torsion of homogeneity $2$ that is a section of $E^*\otimes (TM/H)^*\otimes V$. The
 other one is a curvature of homogeneity $3$ that is a section of $V^*\otimes
 (TM/H)^*\otimes L(V,V)$ (with additional symmetry properties). Using this
 information, the proof is completed exactly as for Theorem~\ref{thm3.4}.
\end{proof}

\begin{Remark}\label{rem3.5}\quad
\begin{enumerate}\itemsep=0pt
\item[(1)] Similarly as in the case of Legendrean contact structures, Theorem~\ref{thm3.5}
 covers many cases in which no good description of the sheaves resolved by relative
 BGG sequences was known. Indeed, such a description was only known in the case of a
 correspondence space treated in part (2) of \cite[Theorem 4.11]{Rel-BGG2}. In the
 case of generalized path geometries (with involutive subbundle $V$) these are
 exactly those geometries for which the harmonic curvature component of homogeneity
 $3$ mentioned in the proof of Theorem~\ref{thm3.5} vanishes identically. This means
 that locally the family of paths can be realized as geodesics of a torsion-free
 linear connection on the tangent bundle of a local leaf space. Since the
 unparametrized geodesics depend only on the projective equivalence class of the
 connection, such correspondence spaces in dimension $2n+1$ are best described as
 induced from a projective structure in dimension $n+1$. Of course, there are many
 systems of second order ODE which do not admit a description via geodesics, and
 these lead to generalized path geometries for which the harmonic curvature
 component of homogeneity~$3$ is non-trivial.
\item[(2)] As remarked already, the situation is not as symmetric here as for Legendrean
 contact structures. However, from the point of view of this article, the pair
 $\frak q\subset\tilde{\frak p}\subset\frak g$ is much less interesting. This leads
 to $T_\rho M=E$ and the basic relative tractor bundle $TM/E$. On the one hand, the
 fact that $T_\rho M$ has rank one, then implies that all relative BGG sequences
 consist of a single operator only. On the other hand, the class of relative tractor
 bundles provided by our construction is less rich then in the cases discussed in
 Theorems~\ref{thm3.4} and~\ref{thm3.5}. The point here is that essentially $\tilde
 P/\tilde P_+\cong S({\rm GL}(2,\Bbb R)\x {\rm GL}(n,\Bbb R))\subset {\rm GL}(2n,\Bbb R)$ in our case
 and the bundle $TM/E$ corresponds to the tensor product of the dual of the standard
 representation of the ${\rm GL}(2,\Bbb R)$ factor with the standard representation of the
 ${\rm GL}(n,\Bbb R)$-factor. The latter give rise to relative tractor bundles of rank $2$
 respectively $n$, that cannot be obtained by tensorial constructions form~$TM/E$.
 The tensor product decomposition does not descend to a local leaf space for~$E$
 unless the initial geometry is torsion-free, in which case it induces a
 $(2,n)$-Grassmannian structure on the local leaf space.
\item[(3)] The relation of relative tractor bundles to $TM/V$ in the case of generalized
 path geometries was observed during the work on the thesis \cite{Zhangwen-thesis}
 of the second author under the direction of the first author. Relative tractor
 bundles and the relative BGG machinery play only a~minor role in the thesis,
 however. One of the results of the thesis is a complete description of Weyl
 structures for generalized path geometries and relative tractor connections play an
 important role in the characterization of Weyl connections and in the description
 of tractor calculus in terms of Weyl connections. In particular, this also leads to
 a description of relative tractor bundles and relative tractor connections as
 discussed in Remark~\ref{rem3.1}.
\end{enumerate}
\end{Remark}

\subsection{Explicit examples of relative BGG resolutions}\label{3.6}
 Here we give a few explicit examples of relative BGG resolutions for the examples
 treated in the last two sections. Both cases are closely related to standard BGG
 sequences associated to projective structures in appropriate dimension. We do not go
 into the details of how to compute the relevant relative Lie algebra homology
 groups but refer to \cite{Rel-BGG1} and \cite{Rel-BGG2}. We focus on the case of
 generalized path geometries in dimension $7$. This is quite close to the
 five-dimensional case discussed in \cite{Rel-BGG2} but shows a bit more varied
 behavior in simple cases. In the end, we briefly comment on the case of Legendrean
 contact structures.

 We will use the Dynkin diagram notation for irreducible representations, see
\cite[Section 3.2]{book} for general information. In our case, \smash{$\xbbb{}{}{}{}$}
 indicates the parabolic subalgebra $\frak p$ and irreducible representations of
 $\frak p$ (and of the group $P$) are indicated by putting over the nodes the
 coefficients in the expansion of the highest weight in terms of fundamental
 weights. In particular the last three of these numbers have to be non-negative
 integers. Similarly, \smash{$\xxbb{}{}{}{}$} denotes the parabolic subalgebra $\frak q$ and
 with numbers over the nodes irreducible representations of $\frak q$ and of the
 corresponding group $Q$. Here only the last two coefficients have to be
 non-negative integers. Determining the forms of the relative BGG sequences is a
 simple extension of part~(1) in~\mbox{\cite[Example~3.2]{Rel-BGG1}}, and we just state
 the results below. They have the same form as for standard BGG sequences for
three-dimensional projective structures. In the picture of weights, this corresponds
 to dropping the leftmost node of all diagrams.

 So take a generalized path geometry $(M,E\oplus V)$ with $\dim(M)=7$, so $V$ has
 rank $3$. The fact that the basic relative tractor bundle $\Cal VM=TM/V$ is induced
 by the $P$-irreducible quotient of the adjoint representation shows that it
 corresponds to the representation \smash{$\xbbb{1}{0}{0}{1}$}. The~irreducible subbundle
 $(E\oplus V)/V\cong E$ corresponds to \smash{$\xxbb{-2}{1}{0}{0}$} and the quotient of~$TM/V$
 by this subbundle is isomorphic to $TM/(E\oplus V)$ and hence corresponds to
 \smash{$\xxbb{1}{0}{0}{1}$}. Dropping the leftmost nodes in the diagrams exhibits that this
 is analogous to the standard tractor bundle and its composition series for
three-dimensional projective structures.

 One family of well-known BGG sequences in projective geometry is induced by the
 symmetric powers of the standard cotractor bundle, which is dual to the standard
 tractor bundle. Now $(TM/V)^*$ corresponds to \smash{$\xbbb{-2}{1}{0}{0}$} and hence
 $S^k(TM/V)^*$ corresponds to \smash{$\xbbb{-2k}{k}{0}{0}$}. The relative BGG sequence
 induced by each of these bundles starts with the $Q$-irreducible quotient of the
 bundle, which is the density bundle \smash{$\xxbb{-2k}{k}{0}{0}$}. The next bundle in the~relative
 BGG sequence is \smash{$\wxxbb{-k+1}{-k-2}{k+1}0$}. It is easy to see that this is
 obtained by twisting $S^{k+1}V^*$ by an appropriate density bundle. In particular,
 the first operator in the relative BGG sequences has order $k+1$ in this case, and
 by Theorem~\ref{thm3.5} locally its kernel consists of all pullbacks of symmetric
 $\binom{0}{k}$-tensor fields on a local leaf space. One can also deduce from this,
 that all further operators in the relative BGG sequence have order one.

 The full relative BGG sequence for $k=1$, i.e., the bundle $(TM/V)^*$, has the form
 \begin{equation}\label{sequence}
\xxbb{-2}100\to \xxbb0{-3}20 \to\xxbb1{-4}11\to \xxbb2{-5}10.
\end{equation}
Up to a twist by a line bundle, the last two bundles can be interpreted as the
kernel of the complete symmetrization in $V^*\otimes S^2V^*$ and as $V^*$,
respectively.

Another well-known projective BGG sequence is a projectively invariant version of the
Riemannian deformation sequence. For flat metrics it is also known as the Calabi
complex or as the fundamental complex of linear elasticity theory, see
\cite{MikE}. In our setting, this corresponds to the relative BGG sequence induced by
the relative tractor bundle $\La^2(TM/V)^*$, which corresponds to the representation
\smash{$\xbbb{-3}010$}. One then computes that the full relative BGG sequence has the form
\[
\xxbb{-3}010 \to \xxbb{-2}{-2}20 \to \xxbb0{-4}02 \to \xxbb1{-3}01.
\]
Up to a twist by a density bundle, the first two bundles in the sequence are
induced by the representations $V^*$ and $S^2V^*$, so the first operator is like a
symmetrized covariant derivative in $V$-directions. By Theorem~\ref{thm3.5}, locally
its kernel is isomorphic to the pullbacks of $2$-forms on a~local leaf space. Up to
a twist by a density bundle, the third bundle is induced by $S^2V$, and the second
operator is of order two. This operator is the relative analog of the projectively
invariant version of the operator that computes the infinitesimal change of curvature
caused by an infinitesimal deformation of a Riemannian metric. The last bundle is (up
to a twist) induced by $V$ and the last operator roughly is a divergence in
$V$-directions. To make the operators in all the BGG sequences more explicit, one has
to use Weyl structures for path geometries, see \cite{Zhangwen-thesis}.

The relative BGG sequences for Legendrean contact structures as discussed in Section
\ref{3.4} are also similar in form to standard BGG sequences for projective
structures. If the original manifold $M$ has dimension $2n+1$, then relative BGG
sequences are related to BGG sequences for projective structures in dimension
$n$. Here the basic relative tractor bundle $TM/F$ contains the distinguished
subbundle $H/F\cong E\subset TM/F$ of rank $n$, and is corresponds to the dual of the
standard tractor bundle in projective geometry. So in particular, the relative BGG
sequence determined by this bundle starts with a second order operator defined on a
line bundle, and then extends similar to \eqref{sequence} (up to twists by
appropriate line bundles) with $S^2E^*$, the kernel of the complete symmetrization in
$E^*\otimes S^2E^*$, and $E^*$. The operators in the sequence have orders $2$, $1$,
and $1$. Again, they can be made more explicit in terms of (families of)
distinguished connections, see \cite{Michal-thesis}.

\subsection*{Acknowledgements}

For open access purposes, the authors have applied a CC
BY public copyright license to any author-accepted manuscript version arising from
this submission. The first and second authors supported by the Austrian Science Fund (FWF), \href{https://doi.org/10.55776/P33559}{doi:10.55776/P33559}. The second and third authors partially supported by the Austrian Science
Fund (FWF), \href{https://doi.org/10.55776/Y963}{doi:10.55776/Y963}. This article is based upon work from COST Action CaLISTA CA21109
supported by COST (European Cooperation in Science and
Technology, \url{https://www.cost.eu}).
We thank the anonymous referees for several helpful comments and suggestions.

\pdfbookmark[1]{References}{ref}
\LastPageEnding

\end{document}